\newcommand{\Sp}{\mathrm{Sp}}
\newcommand{\GL}{\mathrm{GL}}
\newcommand{\Oo}{\mathrm{O}}
\newcommand{\SO}{\mathrm{SO}}
\newcommand{\GSp}{\mathrm{GSp}}
\newcommand{\GSO}{\mathrm{GSO}}
\newcommand{\PGL}{\mathrm{PGL}}
\newcommand{\GO}{\mathrm{GO}}
\newcommand{\Hom}{\mathrm{Hom}}
\newcommand{\Gal}{\mathrm{Gal}}
\newcommand{\Ext}{\mathrm{Ext}}
\newcommand{\BC}{\mathrm{BC}}
\newtheorem{thm}{Theorem}[section]
\newtheorem{lem}[thm]{Lemma}
\newtheorem{prop}[thm]{Proposition}
\newtheorem{coro}[thm]{Corollary}
\newtheorem{rem}[thm]{Remark}
\def\iddots{\mathinner{\mkern1mu\raise\p@
	\hbox{.}\mkern2mu\raise4\p@\hbox{.}\mkern2mu
	\raise7\p@\vbox{\kern7\p@\hbox{.}}\mkern1mu}}
\def\adots{\mathinner{\mkern2mu\raise\p@\hbox{.}
 \mkern2mu\raise4\p@\hbox{.}\mkern1mu
 \raise7\p@\vbox{\kern7\p@\hbox{.}}\mkern1mu}}
\title{A New Proof to the Period Problems of $\GL_2$}
\author{LU Hengfei}
\address{Department of Mathematics, National University of Singapore, 10 Lower Kent Ridge Road, Singapore 119076}
\email{luhengfei@u.nus.edu}
\begin{document}
\subjclass[2010]{11F27.11F70.22E50}
\keywords{theta lifts, periods, inner forms,  Whittaker model}

\begin{abstract}
We use the relations among the base change representations, theta lifts and Whittaker model, to give a new proof to the  period problems of $\GL_2$ over a quadratic extension $E/F$ of local fields. We classify both
local and global $D^\times(F)$-distinguished representations $\pi^D$ of $D^\times(E),$ where $D^\times$ is an inner form of $\GL_2$ defined over a nonarchimedean field or a number field $F.$ 
\end{abstract}

\maketitle
\tableofcontents
\section{Introduction}
Period problems,
which are closely related to Harmonic Analysis, have been extensively studied for classical groups. The most general situations have been studied  in  \cite{sakellaridis2012periods}, and we will focus on the period problems of $\GL_2$  in this paper.
\par
Assume that $F$ is a nonarchimedean local field of characteristic $0.$
Let $G$ be a connected reductive group defined over $F$ and $H$
be a subgroup of $G.$ Given a smooth irreducible representation $\pi$ of $G(F),$ one may consider $\dim \Hom_{H(F)}(\pi,\mathbb{C}).$ If it is nonzero, then we say that $\pi$ is  $H(F)$-distinguished, or has a nonzero $H(F)$-period. 
One may also consider the Ext version $\Ext^1_{H(F)}(\pi,\mathbb{C})$ in the category of smooth representations of $H(F)$ with trivial central characters.
\par
Let $W_F$ be the Weil group of $F$ and  $WD_F$ be the Weil-Deligne group. 
Assume that $\tau$ is an irreducible smooth representation of $\GL_2(F),$
 with Langlands parameter 
$\phi_\tau:WD_F\longrightarrow \GL_2(\mathbb{C}).$
Assume that $E$ is a quadratic field extension of $F.$ Then $\phi_\tau|_{WD_E}$
corresponds to an irreducible admissible representation of $\GL_2(E),$ which is denoted by $\BC(\tau).$ Then
we have the following results:
\begin{thm}
[\textbf{Main Theorem (Local)}]\label{localmain}
Let  $E$ be a quadratic field extension of a nonarchimedean local field $F,$
with the Galois group $\Gal(E/F)=\{1,\sigma \}$ and an associated quadratic character $\omega_{E/F}$ of $F^\times.$
Assume that $\pi$ is a generic irreducible smooth representation of $\GL_2(E),$ with central character $\omega_{\pi}=\mu^\sigma\mu^{-1},$ where $\mu$ is a character of $E^\times.$
\begin{enumerate}[(1)]
	\item
The following statements are equivalent:
\begin{enumerate}[(i)]
\item  $\pi$ is  $\GL_2(F)$-distinguished;
\item  $\pi=\BC(\tau)\otimes\mu^{-1}$ for some irreducible representation $\tau$ of $\GL_2(F)$ satisfying $\omega_\tau=\omega_{E/F}\cdot \mu|_{F^\times};$
\item the Langlands parameter $\phi_\pi:WD_E\rightarrow \GL_2(\mathbb{C})$ is conjugate-orthogonal in the sense of \cite[Section 3]{gan2011symplectic}. 
\end{enumerate}
\item Assume that $D$ is the nonsplit quaternion algebra defined over $F,$ then $D^\times(E)=\GL_2(E).$ Then the following statements are equivalent:
\begin{enumerate}[(i)]
	\item $\pi$ is $D^\times(F)$-distinguished;
\item $\pi$ is $\GL_2(F)$-distinguished
and $\pi\neq\pi(\chi_1,\chi_2),$ where $\chi_1\neq\chi_2$
and $\chi_1|_{F^\times}=\chi_2|_{F^\times}=1.$	
 \end{enumerate}
\item Regarding $\pi|_{D^\times(F)}$ (resp. $\pi|_{\GL_2(F)}$) as a smooth representation of $PD^\times(F)$(resp. $\PGL_2(F)),$
then the following statements are equivalent:
\begin{enumerate}[(i)]
	\item $\dim \Hom_{\PGL_2(F)}(\pi|_{\GL_2(F)},\mathbb{C} )-\dim \Ext^1_{\PGL_2(F)}(\pi|_{\GL_2(F)},\mathbb{C} )=1  ;$
	\item $\dim \Hom_{PD^\times(F)}(\pi|_{D^\times(F)},\mathbb{C} )-\dim \Ext^1_{PD^\times(F)}(\pi|_{D^\times(F)},\mathbb{C} )=1  ;$
	\item $\pi$ is  $D^\times(F)$-distinguished.
\end{enumerate}
\end{enumerate}
\end{thm}
For both $(1)$ and $(2),$ Flicker \cite[Page 162]{flicker1991distinguished} proved the cases for the principal series, and later Flicker and Hakim \cite[Theorem 0.1]{flicker1994quaternionic}  proved the cases for discrete series,  using relative trace formula. In this paper, we  use the local theta correspondence to give a new proof. There are three main ingredients for our proof:
\begin{itemize}
	\item the explicit big theta lifts $\Theta_{V,W,\psi}(\Sigma)$ and $\Theta_{W,V,\psi}(\tau )$;
	\item the Whittaker model $Wh_{N,\psi}(\Theta_{V,W,\psi }(\Sigma) )$ and
	\item the Whittaker model $Wh_{U,\psi_E}(\Theta_{W,V,\psi }(\tau)).$
\end{itemize}

 \par
If $F$ is a number field, let $\mathbb{A}=\mathbb{A}_F$ be the adele ring of $F.$
Let $G$ be a connected reductive group defined over $F$ and $H$
be a subgroup of $G.$ Assume that
$\pi$ is an  automorphic representation of $G(\mathbb{A}).$
Let  $Z(\mathbb{A} )$ be the center of $H(\mathbb{A}).$  If the integral
\[P(\phi)=\int_{H(F)Z(\mathbb{A})\backslash H(\mathbb{A})}{\phi(h)}dh\]
for $\phi\in\pi$
converges and is nonzero on $\pi,$ then we say that $\pi$ is $H$-distinguished.   Flicker \cite[Page 297]{flicker1988twisted} showed that a cuspidal automorphic representation $\pi$ of $\GL_n(\mathbb{A}_E),$ where $E$ is a quadratic extension of $F,$
 is $\GL_n(\mathbb{A})$-distinguished if and only if the partial Asai L-function $L^S(s,\pi,As )$ has a pole at $s=1.$ In this paper, we will use the  global theta lift and its Whittaker model to discuss the case
$G(\mathbb{A})=D^\times(\mathbb{A}_E)$ and $H(\mathbb{A})=D^\times(\mathbb{A}),$ where $D^\times$ is an inner form of $\GL_2$ defined over  $F.$
\par
By the  Galois cohomology, the inner form $D^\times$ of $\GL_2$ corresponds to a quaternion algebra defined over $F$ with involution $\ast,$
denoted by $D.$   Then the tensor product $D\otimes_F E$ is an even Clifford algebra of $8$ dimensions defined over $F,$ denoted by $B.$ There are two natural $F$-linear automorphisms on $B,$ one is induced by the involution $\ast,$ and the other is induced by the Galois action.
More precisely, for the element $(d\otimes e)\in B,$ we define
\[(d\otimes e)^\ast=d^\ast\otimes e,\quad (d\otimes e)^\sigma=d\otimes\sigma(e). \]
Then $D$ is the set consisting of all Galois invariant elements and  $E$ coincides with the set consisting of all fixed points under the involution. Moreover, the intersection of
$D$ and $E$ coincides with $F.$
Set $X_D=\{x\in B|~x^\sigma=x^\ast \},$ which is a $4$-dimensional quadratic vector space defined over $F,$
with a quadratic form $q(x)=xx^\ast$ taking values in $E.$ In fact, if the element $q(x)$ is Galois invariant, then $q(x)$ lies inside $F$ and the quadratic form is well-defined on the space $X_D.$ Now we can define an $(F^\times\times B^\times)$-action on $X_D$ to be 
\[(\lambda,g).x=\lambda gx\sigma(g)^\ast, \]
where $\lambda\in F^\times,~g\in B^\times. $ 
In fact, the quaternion algebra  $D(E)$ over $E$ coincides with the even Clifford algebra $B.$
In \cite{roberts2001global}, Roberts  showed that the similitude group $\GSO(X_D,q)$ is a quotient of $F^\times\times D^\times(E),$ and  the special orthogonal group $\SO(Y,F)$ for some  $3$-dimensional subspace $Y$ in $X_D$ is $PD^\times(F).$  
\par
Given any cuspidal automorphic representation $\pi^D$ of $D^\times(\mathbb{A}_E),$
we may consider the Jacquet-Langlands correspondence representation $\pi=JL(\pi^D)$ of $\GL_2(\mathbb{A}_E ).$
Assume that the central character $\omega_\pi=\mu^\sigma\mu^{-1}$ for some Hecke character  $\mu$ of $\mathbb{A}_E^\times.$ Then $(\pi\otimes\mu)\boxtimes\mu|_{F^\times}$ is an automorphic representation of $\GSO(V,\mathbb{A}),$ where $V=F\oplus E\oplus F$ is a quadratic space over $F$ with a quadratic form $q(e,x,y)=N_{E/F}(e)-xy.$
\begin{thm}
[\textbf{Main Theorem (Global)}] Let $E$ be a quadratic extension over  a number field $F.$
Let $D^\times$ be an inner form of $\GL_2$ defined over  $F.$
Let $\pi^D$
be a cuspidal automorphic representation of $D^\times(\mathbb{A}_E),$ with $\omega_{\pi^D}|_{Z(\mathbb{A}) }=1.$
Assume that the Jacquet-Langlands
correspondence representation $\pi$  is a cuspidal automorphic representation
of $ \GL_2(\mathbb{A}_E).$ 
 Then the following statements are equivalent:
\begin{enumerate}[(i)]\label{globalmain}
\item  $\pi^D$ is $D^\times(\mathbb{A})$-distinguished;
\item $\Sigma^D=(\pi^D\otimes\mu)\boxtimes\mu|_{\mathbb{A}^\times}$ as a cuspidal automorphic representation of $\GSO(X_D,\mathbb{A})$ is $PD^\times(\mathbb{A})$-distinguished for some Hecke character  $\mu:E^\times\backslash \mathbb{A}_E^\times\rightarrow\mathbb{C}^\times$ satisfying $\omega_{\pi^D}=\mu^\sigma\mu^{-1}$;
\item 
$\Sigma=(\pi\otimes\mu)\boxtimes\mu|_{\mathbb{A}^\times}$ as a cuspidal automorphic representation of $\GSO(V,\mathbb{A} )$ is $\PGL_2(\mathbb{A} )$- distinguished and
for each local place $v$ of $F$ where $D_v$ is ramified and $E_v$ is a field, the local representation $\pi_v$ is isomorphic to 
$\BC(\tau_v)\otimes\mu_v^{-1}$ for some representation 
 $\tau_v$ of $\GL_2(F_v)$ with $\omega_{\tau_v}=\mu_v|_{F^\times_v}\omega_{E_v/F_v}$ and $\tau_v|_{\GL_2^+}$ is irreducible, where $\GL_2^+=\{g\in \GL_2(F_v)|\omega_{E_v/F_v}(\det(g))=1 \}.$
\end{enumerate} 
\end{thm}
\par
In a short summary, we give a criterion to  the local distinction problem of $\GL_2$, which is well-known in \cite[Theorem 0.2]{flicker1994quaternionic}. But we use a different method, i.e., the theta correspondence and base change to recover the local period problems for both $\GL_2$ and $D^\times.$ Then  the Ext version for the Branching law can be obtained easily via the Mackey theory.
In the global situation, we use the  regularized Siegel-Weil formula in the first term range, which appears in \cite[Corollary 7.9]{gan2011regularized}, and the global theta lifts from orthogonal groups to show \cite[Thereorem 0.2]{flicker1994quaternionic}, i.e. our main global theorem.
However, our method can not be extended to the cases for the general inner forms of $\GL_n$ in \cite[Theorem 0.5]{flicker1994quaternionic}.

Now we briefly describe the contents and the organization of this paper.
In section $2$, we set up the notation about the quadratic vector spaces. In section \ref{sec3}, the explicit local theta lifts are given, including the small theta lifts and the big theta lifts.
 In section $4$, we give the proof for the \textbf{Main Theorem (Local)}.
 In section $5,$  the global theta lifts are introduced. In section $6,$ we give the proof of  the \textbf{Main Theorem (Global)}.
\subsection*{Acknowledgments} This is part of the author's Ph.D thesis \cite{lu2016}.
The author is grateful to Professor Gan  Wee Teck for his guidance and numerous discussions. The author also thanks the anonymous referees for the careful reading and helpful comments for the earlier version.

\section{Preliminaries}

We follow the notation in \cite{roberts2001global}
to introduce the $4$-dimensional quadratic spaces. Let $F$ be a field with characteristic not equal to $2,$ and let $E$ be a quadratic extension over $F$ with Galois group $\Gal(E/F)=\{1,\sigma\}.$
Assume that $D$ is a non-split quaternion algebra defined over $F$ with an involution $^\ast.$ Then
$B=D\otimes_FE$ is an  algebra over $F,$ with Galois action $\sigma$ and involution
$^\ast.$ 
Set $X_D=\{x\in B|~x^\ast=x^\sigma \}$ to be a $4-$dimensional vector space
 with a quadratic form $q$ taking values in $F.$ 
\par
Now we define an $(F^\times\times B^\times)-$action on $X_D$, for $\lambda\in F^\times$ and $g\in B^\times, $ define
\[(\lambda,g)x=\lambda gx\sigma(g)^*. \]
This action preserves $X_D,$ and $q((\lambda,g)x)=\lambda^2N_{E/F}(gg^\ast)\cdot q(x).$
Thus, the element $(\lambda,g)$ lies inside the similitude orthogonal group $\GO(X_D,F),$ with similitude factor $\lambda^2N_{E/F}(gg^\ast).$
In fact, it lies in the  similitude special orthogonal group $\GSO(X_D,F).$
\begin{thm}\cite[Theorem 2.3]{roberts2001global}\label{similitude}
	There is an exact sequence 
	\[
	\xymatrix{1\ar[r]& E^\times \ar[r] &F^\times\times B^\times \ar[r]& GSO(X_D,F)\ar[r]&1,}
	\] where the first arrow is given by $e\mapsto (N_{E/F}(e^{-1}),e).$
\end{thm}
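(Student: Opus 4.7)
My plan is to verify three claims in turn: first, that the map $\rho\colon(\lambda,g)\mapsto\bigl(x\mapsto\lambda g x\sigma(g)^\ast\bigr)$ is a well-defined homomorphism into $GSO(X_D,F)$; second, that its kernel equals the image of $e\mapsto(N_{E/F}(e^{-1}),e)$; and third, that $\rho$ is surjective on $F$-points. For the first, most of the work is already contained in the paragraph preceding the theorem, where the similitude factor $\lambda^2 N_{E/F}(gg^\ast)$ is computed using that $gg^\ast$ lies in the $\ast$-fixed subring of $B$, namely $E=Z(B)$. It remains to verify that the image still lies in $X_D$: since $\ast$ reverses products and commutes with $\sigma$, one has $(\lambda g x\sigma(g)^\ast)^\ast=\lambda\sigma(g)x^\ast g^\ast$ and $\sigma(\lambda g x\sigma(g)^\ast)=\lambda\sigma(g)\sigma(x)g^\ast$, which agree precisely because $x^\ast=\sigma(x)$ on $X_D$. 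That the image lies in the identity component $GSO$ rather than merely in $GO$ follows from connectedness: $F^\times\times B^\times$ is Zariski-connected as an $F$-algebraic group (both factors being connected, the second via restriction of scalars), so $\rho$ factors through $GSO(X_D)$.

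For the kernel, suppose $\rho(\lambda,g)=\mathrm{id}$. Evaluating at $x=1\in X_D$ gives $\lambda g\sigma(g)^\ast=1$, which upon substitution reduces the general relation to $gxg^{-1}=x$ for all $x\in X_D$. Fixing $\alpha\in E^\times$ with $\sigma(\alpha)=-\alpha$, a direct calculation gives the decomposition $X_D=F\cdot 1\oplus\alpha D^{-}$, where $D^{-}\subset D$ is the trace-zero subspace; hence $E\cdot X_D=E+ED^{-}=ED=B$, so $g$ centralises all of $B$ and must lie in $Z(B)=E$. Once $g\in E$, we have $\sigma(g)^\ast=g^\sigma$, and the constraint $\lambda g g^\sigma=1$ becomes $\lambda=N_{E/F}(g)^{-1}$, which exactly matches the prescribed kernel.

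For surjectivity, I would combine a dimension count with Hilbert~90. Viewed as $F$-algebraic groups one has $\dim(F^\times\times B^\times)=1+8=9$, $\dim(\mathrm{Res}_{E/F}\mathbb{G}_m)=2$, and $\dim GSO(X_D)=\binom{4}{2}+1=7$, so $\rho$ is a morphism of smooth $F$-groups with matching dimensions and the expected kernel. Once one checks that the differential of $\rho$ is surjective (equivalently that the scheme-theoretic kernel is smooth of dimension $2$), $\rho$ is a surjection of $F$-schemes, and the long exact sequence of Galois cohomology reduces surjectivity on $F$-points to the vanishing of $H^1(F,\mathrm{Res}_{E/F}\mathbb{G}_m)$, which follows from Shapiro's lemma together with Hilbert~90.

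The main obstacle I anticipate is precisely this last algebraic-group surjectivity step: it is the essential content beyond the formal exact-sequence manipulations, and verifying it cleanly requires a small amount of work with Lie algebras or a direct construction of preimages. An alternative route, closer to Roberts' original approach, is to bypass the cohomology argument entirely by exploiting the classical identification of four-dimensional quadratic spaces with (twisted) reduced norms on quaternion algebras, which realises $GSO(X_D)$ directly as the prescribed quotient of $F^\times\times B^\times$ by $E^\times$.
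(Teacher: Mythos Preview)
The paper does not give its own proof of this theorem: it is simply quoted from \cite{roberts2001global} without argument. So there is nothing in the paper to compare your proposal against.

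That said, your proof is correct and complete in all essentials. The kernel computation is clean; the observation that $X_D = F\cdot 1 \oplus \alpha D^{-}$ and hence $E\cdot X_D = B$ is exactly the right way to force $g$ into the centre. Your concern about the surjectivity step is overstated: since you are in characteristic zero, the image of a morphism of algebraic groups is a closed subgroup, and the kernel $\mathrm{Res}_{E/F}\mathbb{G}_m$ is a smooth torus of dimension~$2$, so the image has dimension $9-2=7$ and therefore equals the connected $7$-dimensional group $GSO(X_D)$. No Lie-algebra computation is needed beyond this. The passage to $F$-points via $H^1(F,\mathrm{Res}_{E/F}\mathbb{G}_m)=H^1(E,\mathbb{G}_m)=0$ is then immediate, as you say.
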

Let us consider the vector $1\otimes 1\in X_D,$ denoted by $y.$ For arbitrary $d\in D^\times,$ the element
$(N_{D/F}(d^{-1}),d)\in F^\times\times B^\times$ fixes the vector $y.$ Set $y^\perp$ to be the orthogonal subspace with respect to $y$ in the quadratic space $X_D.$
\begin{thm}\cite[Proposition 2.6]{roberts2001global}\label{SO(3)}
	Assume that $D$ is a quaternion algebra over $F,$ let $y,X_D,B$ be the notation as above.
	Then there exist an exact sequence
	\[1\rightarrow F^\times\rightarrow D^\times \rightarrow \SO(y^\perp,F)\rightarrow1, \]
	and the following commutative diagram
	\[\xymatrix{1\ar[r]& F^\times\ar[r]\ar[d]&\ar[r]D^\times \ar[r]\ar[d]&\SO(y^\perp,F)\ar[r]\ar[d]&1\\
		1\ar[r]&E^\times\ar[r]& F^\times\times B^\times\ar[r]& \GSO(X_D,F)\ar[r]&1,
	}\]
	where the inclusion from $D^\times$ to $F^\times\times B^\times $ is given by $g\mapsto (N_{D/F}(g^{-1}),g).$
\end{thm}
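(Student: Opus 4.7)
The strategy is to construct the top row as the preimage of the stabilizer of $y$ inside the short exact sequence of Theorem \ref{similitude}. Since the vector $y = 1 \otimes 1$ is anisotropic (one has $q(y) = 1 \cdot 1^\ast = 1$) and $X_D$ is $4$-dimensional, the stabilizer of $y$ in $SO(X_D, F)$ is exactly $SO(y^\perp, F)$, so it suffices to identify $D^\times / F^\times$ with this stabilizer via the indicated embedding.

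First I would check that the map $d \mapsto (N_{D/F}(d^{-1}), d)$ lands in the stabilizer row. Since $d \in D^\times$ is Galois-invariant, one computes $(N_{D/F}(d^{-1}), d).y = N_{D/F}(d^{-1}) \cdot d \cdot d^\ast = 1$, and the similitude factor equals $N_{D/F}(d^{-1})^2 \cdot N_{E/F}(dd^\ast) = N_{D/F}(d^{-1})^2 \cdot N_{D/F}(d)^2 = 1$, so the image lies in $\mathrm{Stab}_{SO(X_D, F)}(y) = SO(y^\perp, F)$. For the kernel: if $d$ acts trivially on $X_D$, then Theorem \ref{similitude} places $(N_{D/F}(d^{-1}), d)$ in the image of $E^\times$, forcing $d \in D \cap E = F^\times$.

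The main step is surjectivity, which I would obtain through an explicit decomposition. Pick $\alpha \in E$ with $\alpha^\sigma = -\alpha$ and set $a = \alpha^2 \in F^\times$. Writing $B = D \oplus D\alpha$, the defining relation $x^\sigma = x^\ast$ gives the orthogonal decomposition $X_D = F \cdot 1 \oplus D_0 \cdot \alpha$, where $D_0 = \{v \in D : v + v^\ast = 0\}$ is the space of trace-zero quaternions; the form restricts to $q(v\alpha) = a \cdot \mathrm{Nrd}(v)$, so $y^\perp = D_0 \cdot \alpha$. Using that $\alpha$ is central in $B$, a short calculation yields
\[
(N_{D/F}(d^{-1}), d).(v\alpha) = N_{D/F}(d^{-1}) \cdot d v d^\ast \cdot \alpha = (dvd^{-1})\alpha,
\]
so the induced action on $y^\perp \cong D_0$ is conjugation. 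Surjectivity then reduces to the classical isomorphism $PD^\times \cong SO(D_0, \mathrm{Nrd})$ (the orthogonal groups for $\mathrm{Nrd}$ and $a \cdot \mathrm{Nrd}$ coincide). This classical $\mathrm{SO}_3$-identification is the principal obstacle, though a well-established one.

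Finally, commutativity of the diagram is essentially built in: the top row is the row of stabilizers sitting above the bottom row. For the left square, the compatibility $N_{D/F}(\lambda^{-1}) = \lambda^{-2} = N_{E/F}(\lambda^{-1})$ when $\lambda \in F^\times$ identifies the inclusion $F^\times \hookrightarrow E^\times$ with the restriction to $F^\times$ of $d \mapsto (N_{D/F}(d^{-1}), d)$, and the right square commutes by definition of the action.
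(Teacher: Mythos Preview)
The paper does not give its own proof of this theorem: it is stated with a citation to \cite{roberts2001global} and no \texttt{proof} environment follows. So there is nothing to compare against in the paper itself.

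That said, your argument is correct and is essentially the standard one. The decomposition $X_D = F\cdot 1 \oplus D_0\cdot\alpha$ is the right way to make $y^\perp$ explicit, and your computation that the induced action on $D_0$ is conjugation is accurate (using $d^{-1} = d^\ast/N_{D/F}(d)$ to pass from $N_{D/F}(d^{-1})\,dvd^\ast$ to $dvd^{-1}$). The remaining input, that conjugation gives an isomorphism $PD^\times \xrightarrow{\sim} SO(D_0,\mathrm{Nrd})$, is the classical description of forms of $SO_3$ and is exactly what Roberts invokes. Your checks of the kernel and of the commutativity of the two squares are also fine. One small point worth making explicit in a clean write-up: you should remark that the image of $d\mapsto(N_{D/F}(d^{-1}),d)$ lands in $GSO$ (not merely $GO$), i.e.\ that conjugation has determinant $+1$ on $D_0$; this is implicit in your appeal to the classical isomorphism but deserves a sentence.
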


Assume that $X$(resp. $Y$) is a $4$-dimensional quadratic space over $F$ with a quadratic form $q_X$(resp. $q_Y$), pick one anisotropic vector $v_0\in X$ with $q_X(v_0)\neq0.$ Assume that $L$ is an anisotropic line in $Y,$
we define that a pair $(Y,L)$ is isomorphic to the pair $(X,Fv_0)$ if there is an invertible linear transform $T:X\rightarrow Y$ such that $q_Y\circ T=\lambda q_X$ for some $\lambda\in F^\times$ and $T(v_0)\in L.$
\begin{lem}\label{DE}
	There are natural $1-1$ correspondences between the following sets:
	\begin{enumerate}[(i)]
		\item the pair $(D,E),$ where $D$ is  a quaternion algebra  defined over  $F,$ and $E$ is a separable quadratic extension algebra over $F;$
		\item the $3$-dimensional quadratic space class $W$ over $F$;
		\item the $4$-dimensional quadratic space class $(X,q_X)$ with a fixed anisotropic vector $v_0$ such that $q(v_0)=1;$
		\item the triple class $(Y,q_Y,L)$ where $L$ is an anisotropic line in $Y.$
	\end{enumerate} 
\end{lem}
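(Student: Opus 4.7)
\medskip

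\noindent\textbf{Proof proposal.} The plan is to organise the four items as a chain of mutually inverse bijections
\[(i)\ \longleftrightarrow\ (ii)\ \longleftrightarrow\ (iii)\ \longleftrightarrow\ (iv),\]
viewing each item as a different repackaging of the same quadratic-form datum. Only the link $(i)\leftrightarrow(ii)$ has any real content; the other two links are purely formal manipulations by orthogonal decomposition and by rescaling of the form.

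First I would dispose of the formal links. For $(iii)\leftrightarrow(iv)$: a pointed quadratic space $(X,q_X,v_0)$ with $q_X(v_0)=1$ tautologically yields $(X,q_X,Fv_0)$ with $Fv_0$ anisotropic; conversely, given $(Y,q_Y,L)$, pick any generator $w\in L$, set $\lambda=q_Y(w)^{-1}\in F^\times$, and replace $q_Y$ by $\lambda q_Y$. The equivalence in (iv) allows both rescaling of the form and changing the generator of $L$, so this normalisation is well defined on triple classes. For $(ii)\leftrightarrow(iii)$: the orthogonal decomposition $X=Fv_0\perp v_0^\perp$ identifies $(X,v_0)$ with the $3$-dimensional quadratic space $W:=v_0^\perp$, and conversely $W\mapsto\langle1\rangle\perp W$ recovers $(X,v_0)$; these assignments are mutually inverse on isometry classes.

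The heart of the lemma is $(i)\leftrightarrow(iii)$, which is essentially Roberts' construction recalled in Theorems \ref{similitude}--\ref{SO(3)}. In the forward direction, from $(D,E)$ we form $B=D\otimes_F E$ with its involution $*$ and Galois action $\sigma$, set $X_D=\{x\in B\mid x^\sigma=x^*\}$, and observe that $v_0:=1\otimes1$ satisfies $v_0^\sigma=v_0^*=v_0$ and $q(v_0)=v_0 v_0^*=1$. Writing $E=F(\sqrt d)$, any $x\in X_D$ decomposes as $\alpha\otimes1+\beta\otimes\sqrt d$ with $\alpha\in F$ and $\beta\in D^0$ (the trace-zero part); since $\alpha$ is central and $\beta^2=-\mathrm{nrd}(\beta)$, a direct expansion gives
\[q(x)=xx^*=\alpha^2+d\cdot\mathrm{nrd}(\beta),\]
so $W=v_0^\perp$ is isometric to the rescaled trace-zero norm form $\langle d\rangle\otimes q_{D^0}$; its discriminant recovers the class of $d=\operatorname{disc}(E)$ and its even Clifford algebra recovers the class of $D$ in $\operatorname{Br}(F)[2]$. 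In the reverse direction, given $(X,v_0)$ with $q(v_0)=1$, one takes $W=v_0^\perp$ and reads off $E$ as the discriminant algebra of $W$ and $D$ as the even Clifford algebra $C^0(W)$, invoking the classical classification of rank-$3$ quadratic forms over $F$ by discriminant and Clifford invariant.

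The main obstacle is showing that these two recipes compose to the identity on both sides; the nontrivial half is $(i)\to(iii)\to(i)$, i.e.\ that starting from $(D,E)$, passing through $X_D$, and then applying $(\operatorname{disc},C^0)$ to $W=v_0^\perp$ does return $(E,D)$. Choosing a standard basis $\{i,j,ij\}$ of $D^0$ with $i^2=a$, $j^2=b$, this reduces to computing the even Clifford algebra of $\langle-ad,-bd,abd\rangle$ and matching it with $D$; the resulting identities $\operatorname{disc}(W)\equiv d\pmod{F^{\times2}}$ and $C^0(W)\cong D$ close the loop.
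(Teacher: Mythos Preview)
Your argument is correct, but you take a different route to the crucial link $(i)\leftrightarrow(ii)$ than the paper does. The paper disposes of this step in one line of Galois cohomology: since $O(3)\cong SO(3)\times\mu_2$ one has
\[H^1(F,O(3))\cong H^1(F,\mu_2)\times H^1(F,PGL_2)\cong F^\times/{F^\times}^2\times Br_2(F),\]
and the two factors on the right are read off directly as the data $(E,D)$. Your approach instead works through item $(iii)$: you build $X_D$ from $(D,E)$ via Roberts' construction, compute the orthogonal complement $W=v_0^\perp$ explicitly as $\langle d\rangle\otimes q_{D^0}$, and then recover $(E,D)$ from $W$ as $(\operatorname{disc}(W),C^0(W))$ by a direct symbol calculation. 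The paper's cohomological argument is faster and makes the bijection $(i)\leftrightarrow(ii)$ immediate without ever naming the quadratic space; your explicit construction is longer but has the virtue of actually producing the space $X_D$ used throughout the rest of the paper, so the lemma becomes compatible with Theorems~\ref{similitude} and~\ref{SO(3)} rather than sitting alongside them. One small caveat: when you write $E=F(\sqrt d)$ you are implicitly taking $E$ to be a field, whereas the statement allows a separable quadratic \emph{algebra}; you should remark that the split case $E\cong F\times F$ corresponds to $d\in F^{\times2}$ and is covered by the same formulas. For $(ii)\leftrightarrow(iii)$ and $(iii)\leftrightarrow(iv)$ your treatment and the paper's coincide (the paper's condition $q(v+v_0)=q(v)+q(v_0)$ is exactly your orthogonality condition).
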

\begin{proof}
	$(i)\Leftrightarrow(ii)$ comes from the Galois cohomology. Note that $\Oo(3)=\SO(3)\times\mu_2$. Then
	\[H^1(F,\Oo(3))\cong H^1(F,\mu_2)\times H^1(F,\PGL_2)\cong F^\times/{F^\times}^2\times Br_2(F) .\]
	The Brauer group  $Br_2(F)$ corresponds to quaternion division algebras over $F,$ and the quotient group $F^\times/{F^\times}^2$ corresponds to the separable quadratic algebras over $F.$
	\par
	$(ii)\Leftrightarrow (iii)$ is easy to see if we choose $W$ to be the subspace $v_0^\perp.$
	\par
	$(iii)$ is equivalent to $(iv)$ by definition.
\end{proof}
Here we have a group isomorphism $\SO(W)\cong PD^\times$  if $W =v_0^\perp\subset X_D. $
\begin{rem}
In fact, $q(v_0)\in F^\times/(F^\times)^2$ can be any nonzero element in the number field $F.$
\end{rem}
Let $F$ be a number field, and let $E$ be a quadratic extension of $F.$ Let $D$ be a quaternion algebra defined over $F.$
Let $S_{D,E}$ be the set of places $v$ of $F$ such that $D_v$ is ramified and $v$ splits in $E.$
\begin{prop}
	Assume that  $D$ and $D'$ are quaternion algebras over $F.$ Then $D\otimes E\cong D'\otimes E$ as $E-$algebras if and only if $S_{D,E}=S_{D',E}.$
\end{prop}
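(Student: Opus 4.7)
The strategy is to reduce the assertion to the local-global classification of quaternion algebras over a number field, applied to the base field $E$.

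First I would recall the Brauer--Hasse--Noether theorem: a quaternion algebra over a number field is determined, up to isomorphism, by its (necessarily finite, even) set of ramified places. Thus $D\otimes_F E$ and $D'\otimes_F E$ are isomorphic as $E$-algebras if and only if the sets of places $w$ of $E$ at which they ramify coincide. So the task becomes: determine the ramification locus of $D\otimes_F E$ in terms of the ramification locus of $D$ and the splitting behavior of places of $F$ in $E$.

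Next I would analyze this locally, place by place. Fix a place $v$ of $F$ and a place $w$ of $E$ with $w\mid v$. There are two cases.

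\textbf{Case 1:} $v$ splits in $E$. Then $v$ has two places $w_1,w_2$ above it, and $E_{w_i}\cong F_v$ for both. Hence $(D\otimes_F E)_{w_i}\cong D_v\otimes_{F_v}E_{w_i}\cong D_v$, so $(D\otimes_F E)_{w_i}$ is ramified if and only if $D_v$ is ramified.

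\textbf{Case 2:} $v$ is inert or ramified in $E$. Then there is a unique place $w$ above $v$, with $E_w/F_v$ a quadratic field extension, and $(D\otimes_F E)_w\cong D_v\otimes_{F_v}E_w$. If $D_v$ is split this is of course split. If $D_v$ is the unique quaternion division algebra over $F_v$, then $E_w$ is a quadratic extension of $F_v$ and hence embeds as a maximal subfield of $D_v$, so $E_w$ splits $D_v$ and $(D\otimes_F E)_w$ is again split. Thus in this case $(D\otimes_F E)_w$ is always split, regardless of whether $D_v$ is ramified.

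Combining the two cases, the set of ramified places of $D\otimes_F E$ in $E$ is precisely $\{w_1,w_2\mid v\in S_{D,E}\}$, i.e.\ the two places of $E$ above each place in $S_{D,E}$. By Brauer--Hasse--Noether (applied over $E$), we conclude that $D\otimes_F E\cong D'\otimes_F E$ as $E$-algebras if and only if $S_{D,E}=S_{D',E}$.

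The only potential subtlety is Case 2 when $D_v$ is ramified; it relies on the fact that the unique quaternion division algebra over a nonarchimedean local field is split by every quadratic extension (and similarly $\mathbb{H}\otimes_{\mathbb{R}}\mathbb{C}\cong M_2(\mathbb{C})$ at a real place ramifying in $E=\mathbb{C}$). This is the only real content of the proof; everything else is bookkeeping on top of the Brauer--Hasse--Noether theorem.
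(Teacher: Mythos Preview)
Your proof is correct. The paper states this proposition without proof, so there is nothing to compare against; your argument via Brauer--Hasse--Noether over $E$ together with the local computation of the ramification of $D\otimes_F E$ is exactly the standard (and essentially only) way to see this.
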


\section{Local Theta Correspondence}\label{sec3}
Now we briefly recall some facts about the local theta correspondences for similitude groups, referring to  \cite{moeglin1987correspondences}, \cite{kudla1996notes},  \cite{roberts2001global} and \cite{gan2011theta}.
\par
Let $F$ be a local field of characteristic zero.
Consider the dual pair $\Oo(V)\times \Sp(W).$
For simplicity, we may assume that $\dim V$ is even . Fix a nontrivial additive character $\psi$ of $F.$
Let $\omega_\psi$ be the Weil representation for $\Oo(V)\times \Sp(W).$
If $\pi$ is an irreducible representation of $\Oo(V)$ (resp. $\Sp(W)$), the maximal $\pi-$isotypic quotient has the form 
\[\pi\boxtimes\Theta_\psi(\pi) \]
for some smooth representation of $\Sp(W)$ (resp. $\Oo(V)$). We call $\Theta_\psi(\pi )$ or $\Theta_{V,W,\psi}(\pi)$
the big theta lift of $\pi.$ It is known that $\Theta_\psi(\pi)$ is of finite length and hence is admissible. Let $\theta_\psi(\pi)$ or $\theta_{V,W,\psi}(\pi)$ be the maximal semisimple quotient of $\Theta_\psi(\pi),$ which is called the small theta lift of $\pi.$ Then there is a conjecture of Howe which states that
\begin{itemize}
	\item $\theta_\psi(\pi)$ is irreducible whenever $\Theta_\psi(\pi)$ is non-zero.
	\item the map $\pi\mapsto \theta_\psi(\pi)$ is injective on its domain.
\end{itemize}
This has been proved by Waldspurger when the residual characteristic $p$ of $F$ is is not $2.$ Recently, it has been proved completely, see \cite{gan2014howe},\cite{gan2014proof} and  \cite{gan2015howe}.
\par
Then we 
 extend Weil representation to the case of similitude groups. Let $\lambda_V$ and $\lambda_W$
be the similitude factors of $\GO(V)$ and $\GSp(W)$ respectively. 
We shall consider the group
\[R=\GO(V)\times \GSp^+(W) \]
where $\GSp^+(W)$ is the subgroup of $\GSp(W)$ consisting of elements $g$ such that $\lambda_W(g)$
lies in the image of $\lambda_V.$ Define
\[R_0=\{(h,g)\in R|~\lambda_V(h)\lambda_W(g) =1\} \]
to be the subgroup of $R.$ The Weil representation $\omega_\psi$ extends naturally to the group $R_0$
via \[\omega_\psi(g,h)\phi=|\lambda_V(h)|^{-\frac{1}{8}\dim V\cdot\dim W } \omega(g_1,1)(\phi\circ h^{-1}) \]
where \[g_1=g\begin{pmatrix}
\lambda_W(g)^{-1}&0\\0&1
\end{pmatrix}\in Sp(W). \]
Here the central elements $(t,t^{-1})\in R_0$ acts by the quadratic character $\chi_V(t)^{\frac{\dim W}{2}},$
which is slightly different from the normalization used in \cite{roberts2001global}.
\par
Now we consider the compactly induced representation
\[\Omega=ind_{R_0}^R\omega_\psi. \]
As  a representation of $R,$ $\Omega$ depends only on the orbit of $\psi$ under the evident action of $Im\lambda_V\subset F^\times.$ For example, if $\lambda_V$ is surjective, then $\Omega$ is independent of $\psi.$ For any irreducible representation $\pi$ or $\GO(V)$ (resp. $\GSp^+(W)$),
the maximal $\pi-$isotropic quotient of $\Omega$ has the form
\[\pi\otimes\Theta_\psi(\pi) \]
where $\Theta_\psi(\pi)$ is some smooth representation of $\GSp^+(W)$ (resp. $\GO(V)$). Similarly, we let
$\theta_\psi(\pi)$ be the maximal semisimple quotient of $\Theta_\psi(\pi).$
Note that though $\Theta_\psi(\pi)$ may be reducible, it has a central character $\omega_{\Theta_\psi(\pi)}$
given by
 \[\omega_{\Theta_\psi(\pi) }=\chi_V^{\frac{\dim W}{2}}\omega_\pi . \] 
 There is an extended Howe conjecture for similitude groups, which says that $\theta_\psi(\pi)$ is irreducible whenever $\Theta_\psi(\pi)$
is non-zero and the map $\pi\mapsto\theta_\psi(\pi)$ is injective on its domain.
It was shown by Roberts \cite{roberts1996theta} that this follows from the Howe conjecture for isometry groups. 
\par In our cases,
 there are two types of $4$-dimension quadratic spaces over $F$ with discriminant  $E.$ Let $\epsilon\in F^\times$ such that $\omega_{E/F}(\epsilon)=-1.$
 Set 
 \[\begin{cases}
 V^+=(V_E,N_{E/F})\oplus\mathbb{H}\\
 V^-=\epsilon V^+
 \end{cases} \] 
 where $V_E=E$ as a $2$-dimensional vector space over $F$. Let $V=V^+$ with a quadratic form $q,$  we write elements of $V$ as $$v=\begin{pmatrix}
 a&x\\\sigma(x)&b
 \end{pmatrix},~\mbox{ and the quadratic form}~ q(v)=N_{E/F}(x)-ab.$$
 The quadratic character  $\chi_V$ is $\omega_{E/F}$ and the Hasse-invariant $$\epsilon(V)=1.$$
  For any vector $v=\begin{pmatrix}
  a&x\\\sigma(x)&b
  \end{pmatrix}\in V^-,$ we define a quadratic form $$q^-(v)=\epsilon(N_{E/F}(x)-ab).$$
  The quadratic character $\chi_{V^-}$ is $\omega_{E/F}$ and the Hasse-invariant of $V^-$ is $-1.$
  \par
  By Theorem \ref{similitude},  we have an isomorphism
   \[\GSO(V,q)\cong \frac{\GL_2(E)\times F^\times}{\triangle E^\times },~\mbox{where}~\triangle(t)=(t,N_{E/F}(t^{-1})),t\in E^\times. \]
   Later, we denote $\GSO(V^+)$ as $\GSO(3,1),$ and denote
   $\GSO(V^-)$ as $\GSO(1,3).$
   \par
    \subsection*{Representations of $\GSO(V)$} Assume that $\chi$ is a character of $F^\times.$
    Given a representation $\pi$ of $\GL_2(E),$ then the representation $\pi\otimes\chi$ of $\GL_2(E)\times F^\times$ is a representation of $\GSO(V)$
    if and only if $\omega_\pi=\chi\circ N_{E/F}.$ If so, we  denote $\pi\boxtimes\chi$ as the representation of $\GSO(V).$ Fix a parabolic subgroup $P,$ which stabilizes the isotropic line $\Bigg\{\begin{pmatrix}
    x&0\\0&0
    \end{pmatrix} \Bigg\}$ in $V,$ assume that $P=MN$ and the Levi subgroup $M$ is a quotient of $T(E)\times F^\times,$
    where $T(E)$ is the diagonal torus of $\GL_2(E).$
    Assume that $\mu:E^\times\rightarrow\mathbb{C}^\times$ is a character, denote $V_E=E$ as a $2-$dimensional quadratic space over $F.$ 
    Consider the normalized induced representation $I_P(\mu,\chi)$ of $\GSO(V).$
    \begin{lem}
    	\cite[Lemma A.6]{gan2011endoscopy} For a character $\mu$ of $E^\times,$ we have
    	\[I_P(\mu,\chi)=\pi((\chi\circ N_{E/F})\mu,\mu^\sigma )\boxtimes(\chi\cdot\mu|_{F^\times}) .\]
    \end{lem}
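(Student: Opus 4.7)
The strategy is to lift the normalized induction $I_P$ through the central covering $F^\times \times GL_2(E) \twoheadrightarrow GSO(V)$ of Theorem \ref{similitude}, identify the pulled-back parabolic and its Levi character explicitly, and then read off the result as an external product of a $GL_2(E)$-principal series with a character of $F^\times$.

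First I would identify the parabolic. Taking $v_0 = \bigl(\begin{smallmatrix}1 & 0 \\ 0 & 0\end{smallmatrix}\bigr)$ as the isotropic generator of the line fixed by $P$, a direct calculation with $g = \bigl(\begin{smallmatrix}a & b \\ c & d\end{smallmatrix}\bigr)$ shows that $g v_0 \sigma(g)^t$ has lower-left entry $c\sigma(a)$, so the preimage of $P$ in $F^\times \times GL_2(E)$ is $F^\times \times B(E)$ with $B(E)$ the upper-triangular Borel; its Levi $F^\times \times T(E)$ surjects onto $M$ with kernel $\Delta E^\times$.

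Next I would trace through the Levi. Decomposing $V = Fv_0 \oplus V_E \oplus Fv_0^\ast$ with $v_0^\ast = \bigl(\begin{smallmatrix}0 & 0 \\ 0 & 1\end{smallmatrix}\bigr)$ and middle plane $V_E = \{\bigl(\begin{smallmatrix}0 & x \\ \sigma(x) & 0\end{smallmatrix}\bigr) : x \in E\}$, one verifies that $(\lambda, \mathrm{diag}(a,d))$ scales $v_0$ by $\lambda N_{E/F}(a)$ and acts on $V_E \cong E$ by multiplication by $\lambda a \sigma(d) \in E^\times \cong GSO(V_E)$. Under the resulting isomorphism $M \cong F^\times \times E^\times$ (scaling of $Fv_0$ paired with the $GSO(V_E)$-component), the character $(\chi, \mu)$ pulls back to the character of $F^\times \times T(E)$ given by
$$(\lambda, \mathrm{diag}(a,d)) \;\longmapsto\; \chi(\lambda)\mu|_{F^\times}(\lambda) \cdot (\chi\circ N_{E/F})(a)\mu(a) \cdot \mu^\sigma(d).$$

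Finally, since $\Delta E^\times$ is central in $F^\times \times GL_2(E)$ and the pulled-back character is $\Delta E^\times$-invariant (this is exactly the compatibility $\omega_{\pi(\ldots)} = (\chi\cdot\mu|_{F^\times}) \circ N_{E/F}$ needed for the right-hand side to descend to $GSO(V)$), parabolic induction commutes with the quotient, and the modular characters of $P$ and $B(E) \times F^\times$ agree under the isogeny. Hence
$$I_P^{GSO(V)}(\chi,\mu) = \mathrm{Ind}_{B(E)\times F^\times}^{GL_2(E)\times F^\times}\bigl((\chi\circ N_{E/F})\mu \boxtimes \mu^\sigma \boxtimes (\chi\cdot\mu|_{F^\times})\bigr) = \pi((\chi\circ N_{E/F})\mu, \mu^\sigma)\boxtimes (\chi\cdot\mu|_{F^\times}).$$
The main obstacle is bookkeeping: the precise form of $M \cong F^\times \times E^\times$ depends on which Borel one chooses and which $E^\times$-factor of $T(E)$ is labelled first, and this choice controls whether $\mu$ or $\mu^\sigma$ appears in each slot. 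A useful sanity check is that the central character of the resulting principal series equals $(\chi\cdot\mu|_{F^\times})\circ N_{E/F}$, which is exactly the condition to descend through the kernel $\Delta E^\times$.
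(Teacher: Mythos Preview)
The paper does not give its own proof of this lemma; it is simply cited from \cite[Lemma A.6]{gan2011endoscopy}. Your argument is correct and is precisely the standard verification: pull the parabolic $P$ back through the central isogeny $F^\times\times GL_2(E)\twoheadrightarrow GSO(V)$, compute the Levi action on $Fv_0$ and on $V_E$, and match characters. Your explicit formula
\[
(\lambda,\mathrm{diag}(a,d))\longmapsto \chi(\lambda N(a))\,\mu(\lambda a\sigma(d))
= \bigl(\chi\cdot\mu|_{F^\times}\bigr)(\lambda)\cdot(\chi\circ N)\mu(a)\cdot\mu^\sigma(d)
\]
is exactly right, and the point about modular characters is fine because the kernel $\Delta E^\times$ is central and meets the unipotent radical trivially, so $N(E)\xrightarrow{\sim}U$ and $\delta_P$ pulls back to $\delta_{B(E)}$. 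The sanity check you mention (that the central character of the principal series is $(\chi\cdot\mu|_{F^\times})\circ N_{E/F}$) is indeed the descent condition through $\Delta E^\times$, and it is automatically satisfied by your computation. There is nothing to compare against in the paper itself.
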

  In fact, we shall only consider the theta correspondence for $\GSO(V)\times \GSp^+(W).$
    There is no significant loss in restricting to $\GSO(V)$ because of the following lemma.
    \begin{lem} Assume that $\dim W=2$ and $\dim V=4.$
    	Let $\pi$ (resp. $\tau$) be an irreducible representation of $\GSp^+(W)$ (resp. $\GO(V)$) and suppose that
    	\[\Hom_R(\Omega,\pi\otimes\tau )\neq0. \]
    	Then the restriction of $\tau$ to $\GSO(V)$ is irreducible. If $\nu_0=\lambda_V^{-2}\cdot\det$ is the unique nontrivial quadratic character of $\GO(V)/\GSO(V),$ then $\tau\otimes\nu_0$ does not participate in the theta correspondence with $\GSp^+(W).$
    \end{lem}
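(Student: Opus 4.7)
The plan is to combine Clifford theory with the extended Howe duality for similitudes in both of its flavors, for the pair $(GO(V),GSp^+(W))$ and for the smaller pair $(GSO(V),GSp^+(W))$. Since $GSO(V)$ has index $2$ in $GO(V)$, Clifford theory forces a dichotomy for $\tau|_{GSO(V)}$: either (a) $\tau|_{GSO(V)}$ is irreducible, in which case $\tau\not\cong\tau\otimes\nu_0$ (any intertwiner would be a scalar on the irreducible restriction by Schur, while $\nu_0$ forces opposite signs on $GO(V)\setminus GSO(V)$, so that scalar must vanish), or (b) $\tau|_{GSO(V)}=\pi_0\oplus\pi_0^c$ with the two summands distinct and swapped by the outer element, and $\tau\cong\tau\otimes\nu_0$.

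I would rule out case (b) as follows. Restrict the surjection $\Omega\twoheadrightarrow\pi\boxtimes\tau$ to $GSO(V)\times GSp^+(W)$, and denote the restricted Weil representation by $\Omega'$ and the corresponding $GSO(V)$-big theta lift of $\pi$ by $\Theta'(\pi)$. In case (b) both $\pi_0$ and $\pi_0^c$ would appear as irreducible quotients of $\Theta'(\pi)$, hence inside its maximal semisimple quotient $\theta'(\pi)$. But the extended Howe duality for the pair $(GSO(V),GSp^+(W))$, which Roberts deduces from the isometry-group Howe conjecture recalled in Section $3$, forces $\theta'(\pi)$ to be irreducible whenever nonzero. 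Hence $\pi_0\cong\pi_0^c$, a contradiction. So (a) holds, and $\pi_0:=\tau|_{GSO(V)}$ is irreducible.

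For the second assertion, suppose for contradiction that some irreducible $\pi'$ of $GSp^+(W)$ satisfies $Hom_R(\Omega,\pi'\boxtimes(\tau\otimes\nu_0))\neq 0$. Since $\nu_0$ is trivial on $GSO(V)$, restricting to $GSO(V)\times GSp^+(W)$ exhibits $\pi_0$ as the $GSO(V)$-theta lift of both $\pi$ and $\pi'$; the injectivity of $\pi\mapsto\theta'(\pi)$, the other half of Howe duality for $(GSO(V),GSp^+(W))$, then forces $\pi'=\pi$. Hence $\tau$ and $\tau\otimes\nu_0$ are both irreducible $GO(V)$-theta lifts of the same $\pi$, and Howe duality for $(GO(V),GSp^+(W))$ gives $\tau\cong\tau\otimes\nu_0$, contradicting case (a). The main obstacle is really bookkeeping: one needs Howe duality in both similitude flavors simultaneously, plus a careful tracking of how the map $\Omega\twoheadrightarrow\pi\boxtimes\tau$ behaves under restriction to the index-two subgroup $GSO(V)\times GSp^+(W)$ and under twist by $\nu_0$. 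Both ingredients are exactly what the extended Howe conjecture cited above provides, so this lemma is essentially a Mackey-style unwinding of those statements.
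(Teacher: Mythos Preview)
Your argument has a circularity. You invoke Howe duality for the pair $(GSO(V),GSp^+(W))$, both the irreducibility of $\theta'(\pi)$ in ruling out case~(b) and the injectivity of $\pi\mapsto\theta'(\pi)$ in the second part, and you attribute this to Roberts. But Roberts's result passes from the isometry dual pair $(O(V),Sp(W))$ to the similitude pair $(GO(V),GSp^+(W))$; it does not by itself give you the $GSO$-version. Indeed, the very purpose of this lemma in the paper is to justify restricting from $GO(V)$ to $GSO(V)$: once you know that every $\tau$ in the $GO$-correspondence restricts irreducibly to $GSO(V)$ and that $\tau\otimes\nu_0$ never participates, the $GSO$-Howe duality follows from the $GO$-Howe duality. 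So you are assuming what the lemma is meant to establish.

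The paper avoids this by descending to the isometry level. It decomposes $\tau|_{O(V)}=\oplus_i\tau_i$ and then invokes Rallis's theorem (as in \cite[Section~5]{prasad1993local}) for the pair $(O(V),Sp(W))$: each $\tau_i$ appearing in the correspondence with $Sp(W)$ restricts irreducibly to $SO(V)$, satisfies $\tau_i\not\cong\tau_i\otimes\det$, and $\tau_i\otimes\det$ does not participate. This is exactly the isometry-level analogue of the lemma, proved independently, and it lifts back up to give both conclusions for $\tau$ at the similitude level. Your Clifford-theoretic skeleton is fine, but the input you need is Rallis's result for $(O(V),Sp(W))$, not a presumed Howe duality for $(GSO(V),GSp^+(W))$. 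Once you substitute that, your argument becomes essentially the paper's.
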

    One may just follow the proof of Gan and Takeda in \cite[Lemma 2.4]{gan2011theta} to obtain it.
    \begin{prop}\cite[Proposition 2.3]{gan2011theta}\label{thetaofsup}
    	Suppose that $\pi$ is a supercuspidal representation of $\GO(V)$ (resp. $\GSp^+(W)$). Then 
    	$\Theta_\psi(\pi)$ is either zero or  an irreducible representation of $\GSp^+(W)$ (resp. $\GO(V)$).
    \end{prop}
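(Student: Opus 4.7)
The plan is to show that $\Theta_\psi(\pi)$, if nonzero, coincides with its maximal semisimple quotient $\theta_\psi(\pi)$, which is irreducible by the Howe duality theorem stated just above. I would proceed in two steps.

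First I would identify the Jordan--H\"older constituents of $\Theta_\psi(\pi)$. If $\sigma$ is any irreducible subquotient, then dualising the maximal isotypic quotient construction shows that $\pi$ occurs as a subquotient of $\Theta_\psi^\ast(\sigma)$ on the opposite side of the dual pair. Howe duality (both its irreducibility and injectivity clauses) then forces $\pi = \theta_\psi^\ast(\sigma)$ and hence $\sigma \cong \theta_\psi(\pi)$. So every Jordan--H\"older constituent of $\Theta_\psi(\pi)$ is isomorphic to $\theta_\psi(\pi)$.

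Next I would use supercuspidality to upgrade this to an isomorphism $\Theta_\psi(\pi) \cong \theta_\psi(\pi)$. Since $\pi$ is supercuspidal, it is both projective and injective in the category of smooth $GO(V)$-representations with central character $\omega_\pi$. Consequently, the maximal $\pi$-isotypic quotient $\pi \otimes \Theta_\psi(\pi)$ of $\Omega$ splits off as a direct summand, and one has $\Theta_\psi(\pi) \cong \mathrm{Hom}_{GO(V)}(\pi, \Omega)$ as a smooth $GSp^+(W)$-module; in particular, $\Theta_\psi(\pi)$ is semisimple and, by the preceding step, isomorphic to $\theta_\psi(\pi)^{\oplus n}$ for some $n \geq 1$. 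To show $n = 1$, I would invoke the analogous decomposition on the $GSp^+(W)$ side to get $\Theta_\psi^\ast(\theta_\psi(\pi)) \cong \pi^{\oplus m}$, and then pair the two isotypic decompositions inside $\Omega$, viewed as a $GO(V)\times GSp^+(W)$-module, to conclude $n = m = 1$.

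The main technical obstacle will be this last multiplicity-one assertion, since the Howe duality theorem as quoted only gives irreducibility of the small theta lift and injectivity of $\pi \mapsto \theta_\psi(\pi)$, not a priori a multiplicity-one statement for $\pi \boxtimes \theta_\psi(\pi)$ inside $\Omega$. Making it rigorous requires either the strengthened multiplicity-one version of Howe duality that is now available, or an explicit construction via the MVW involution producing a canonical nondegenerate $GO(V)\times GSp^+(W)$-equivariant pairing on $\Omega$ that forces $n = 1$. The companion case in which $\pi$ is a supercuspidal representation of $GSp^+(W)$ follows by the symmetric argument with the roles of the two groups reversed.
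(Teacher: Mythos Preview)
The paper does not supply its own proof; immediately after the statement it refers the reader to Kudla's notes and to Gan's paper on similitude theta correspondences. Your outline is broadly in the spirit of those references (split off $\pi\boxtimes\Theta_\psi(\pi)$ as a direct summand of $\Omega$ using projectivity/injectivity of the supercuspidal $\pi$, then bound the length of $\Theta_\psi(\pi)$), but there is a real gap.

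The problematic step is the assertion in your second paragraph that $\Theta_\psi(\pi)$ is semisimple ``in particular'' once one knows $\Theta_\psi(\pi)\cong\mathrm{Hom}_{GO(V)}(\pi,\Omega)$. Projectivity and injectivity of $\pi$ are properties in the category of smooth $GO(V)$-modules with central character $\omega_\pi$; they say nothing about the $GSp^+(W)$-module structure of the multiplicity space. Even granting your Step~1, knowing that every Jordan--H\"older constituent is $\theta_\psi(\pi)$ does not rule out non-split self-extensions of $\theta_\psi(\pi)$, and you cannot kill $\mathrm{Ext}^1$ by appealing to supercuspidality of $\theta_\psi(\pi)$, since $\theta_\psi(\pi)$ need not be supercuspidal (for instance, in the dihedral case of the paper's theorem on supercuspidal lifts, a supercuspidal $\tau^+$ of $GL_2^+$ has theta lift a principal series of $GSO(V)$). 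Your Step~1 has a related soft spot: from $\pi\boxtimes\sigma$ being a subquotient of $\Omega$ one does not directly get that $\pi$ is a subquotient of $\Theta_\psi^\ast(\sigma)$, because the latter is only a \emph{quotient} of $\Omega$. In the cited references these issues are handled together, essentially by invoking the MVW-type relation $\Theta_\psi(\pi)^\vee\cong\Theta_\psi(\pi^\vee)$ to control irreducible submodules of $\Theta_\psi(\pi)$ at the same time as irreducible quotients, and then playing the two sides of the dual pair against each other; the multiplicity-one obstacle you flag and the missing semisimplicity are really the same difficulty, not two separate ones.
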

   

 \subsection{Whittaker model}
  Given an irreducible representation $\Sigma$ of $\GSO(V),$ we consider the Whittaker model of the big theta lifts $\Theta_\psi(\Sigma).$ 
  \par
  Recall that, for $n(x)=\begin{pmatrix}
 1&x\\0&1
 \end{pmatrix}\in \GSp^+(W)$ and $\phi\in S(NE^\times\times V),$ we have
 $$n(x)\phi(t,v)=\psi(tx\cdot q(v))\phi(t,v).$$
Assume that $\Omega=S(NE^\times)\otimes S(V)$ is the Schwartz function space. Given $a\in F^\times,$ set \[ \Omega_{N,\psi_a}=\Omega/<n(x)\phi-\psi(ax)\phi>.
 \]
 Given an irreducible representation $\Sigma\cong \pi\boxtimes\chi$ of $\GSO(V),$ consider the set $$V_a=\{(t,v)\in NE^\times\times V|~tq(v)=a \}.$$ Pick a point $y_a=(1,a)\in V_a,$
 where $a=\begin{pmatrix}
 a&0\\0&-1
 \end{pmatrix}\in V.$ Let $Z$ be the center of $\GL(W).$  Since $\GSO(V)\times Z$ acts on $V_a$ transitively with
 the action $$(h,b)(t,v)=(\lambda_V(h)\cdot b^2t,bh^{-1}v),$$
 the point $y_a$ has the stabilizer $$J_a=\{(h,b)\in (\GSO(V)\times Z)\cap R_0|~hy_a=by_a \}\subset P_a\times Z,$$
 where $P_a$ is the closed subgroup of $\GSO(V)$ fixing the anisotropic line $Fy_a$ in $V.$
 \begin{prop}\label{whittaker}Given an irreducible smooth representation $\Sigma$ of $\GSO(V),$ we have 
 	$$\Hom_N(\Theta_\psi(\Sigma),\psi_a)\cong \Hom_{\GSO(V)}(\Omega_{N,\psi_a},\Sigma)\cong
	\Hom_{\SO(y_a^\perp)}(\Sigma^\vee,\mathbb{C}).$$
 	\end{prop}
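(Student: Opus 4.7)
} The plan is to establish both isomorphisms by combining the universal property of the big theta lift with an explicit Schr\"odinger-model calculation of the $(N,\psi_a)$-twisted coinvariants of $\Omega$.

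For the first isomorphism I would invoke the definition of $\Theta_\psi(\Sigma)$: there is an $R$-equivariant surjection $\Omega\twoheadrightarrow\Sigma\boxtimes\Theta_\psi(\Sigma)$ whose target is the maximal $\Sigma$-isotypic quotient of $\Omega|_{GSO(V)}$. Taking $(N,\psi_a)$-coinvariants (legitimate because $N$ commutes with $GSO(V)$ inside $R$) descends this to a surjection
\[\Omega_{N,\psi_a}\twoheadrightarrow\Sigma\boxtimes\Theta_\psi(\Sigma)_{N,\psi_a},\]
which remains the maximal $\Sigma$-isotypic quotient of $\Omega_{N,\psi_a}$. Applying $\mathrm{Hom}_{GSO(V)}(-,\Sigma)$ and using $\mathrm{End}_{GSO(V)}(\Sigma)=\mathbb{C}$ gives
\[\mathrm{Hom}_{GSO(V)}(\Omega_{N,\psi_a},\Sigma)=\mathrm{Hom}\bigl(\Theta_\psi(\Sigma)_{N,\psi_a},\mathbb{C}\bigr)=\mathrm{Hom}_N(\Theta_\psi(\Sigma),\psi_a),\]
which is the first asserted equality.

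For the second isomorphism I would pass to the Schr\"odinger realization $\Omega\cong Wh=S(NE^\times\times V)$, where $n(x)\in N$ acts by $n(x)\phi(t,v)=\psi(txq(v))\phi(t,v)$. The defining relation $n(x)\phi=\psi(ax)\phi$ forces Schwartz functions to be supported on the locus $V_a=\{(t,v):tq(v)=a\}$, so $\Omega_{N,\psi_a}\cong S(V_a)$ as $GSO(V)$-modules. Next I would verify that $GSO(V)$ already acts transitively on $V_a$ without needing to enlarge by $Z$: from the action $h\cdot(t,v)=(\lambda_V(h)t,h^{-1}v)$ and the surjectivity of $\lambda_V$ onto $NE^\times$, Witt's extension theorem for similitudes (applied to the anisotropic vectors $v,v_a$ with similitude ratio $q(v_a)/q(v)=t$) supplies $h\in GSO(V)$ carrying $y_a=(1,v_a)$ to any prescribed $(t,v)\in V_a$. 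The stabilizer of $y_a$ is $\{h\in SO(V):hv_a=v_a\}=SO(y_a^\perp)$, so $V_a\cong GSO(V)/SO(y_a^\perp)$ and consequently
\[\Omega_{N,\psi_a}\cong c\text{-}\mathrm{ind}_{SO(y_a^\perp)}^{GSO(V)}\mathbb{C},\]
since the similitude normalization factors $\chi_V\circ\lambda_V$ and $|\lambda_V|^{\bullet}$ of the Weil representation trivialize on $SO(V)$ and the groups in sight are unimodular.

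The computation is then finished by the contragredient form of Frobenius reciprocity, using $(c\text{-}\mathrm{ind}_H^G\mathbb{C})^\vee=\mathrm{Ind}_H^G\mathbb{C}$:
\[\mathrm{Hom}_{GSO(V)}\bigl(c\text{-}\mathrm{ind}_{SO(y_a^\perp)}^{GSO(V)}\mathbb{C},\Sigma\bigr)=\mathrm{Hom}_{GSO(V)}\bigl(\Sigma^\vee,\mathrm{Ind}_{SO(y_a^\perp)}^{GSO(V)}\mathbb{C}\bigr)=\mathrm{Hom}_{SO(y_a^\perp)}(\Sigma^\vee,\mathbb{C}).\]
The main technical obstacle is the identification $\Omega_{N,\psi_a}\cong c\text{-}\mathrm{ind}_{SO(y_a^\perp)}^{GSO(V)}\mathbb{C}$ without extraneous twists: this requires a careful bookkeeping of how the similitude-factor normalizations of the Weil representation and the modular characters of the homogeneous space restrict to $SO(y_a^\perp)\subset SO(V)$, where they all become trivial, so that the final Frobenius calculation produces the clean formula $\mathrm{Hom}_{SO(y_a^\perp)}(\Sigma^\vee,\mathbb{C})$.
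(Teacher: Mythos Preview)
Your proof is correct and follows essentially the same strategy as the paper: compute the twisted Jacquet module $\Omega_{N,\psi_a}$ via the orbit structure on $V_a=\{(t,v):tq(v)=a\}$, identify it as $\mathrm{ind}_{SO(y_a^\perp)}^{GSO(V)}\mathbb{C}$, and conclude by Frobenius reciprocity. The one difference is that the paper lets $GSO(V)\times Z$ act on $V_a$ and then performs a two-step induction through $P_a\times Z$ to reach $\mathrm{ind}_{SO(y_a^\perp)}^{GSO(V)}\mathbb{C}$, whereas you observe directly (using surjectivity of $\lambda_V$ onto $NE^\times$ and Witt's theorem in $SO(V)$) that $GSO(V)$ itself already acts transitively on $V_a$ with stabilizer $SO(y_a^\perp)$; this is a harmless streamlining of the same argument.
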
 
It is very similar to \cite[Proposition 5.5]{ganshimura}.

 	\subsection*{Mixed model}
 	Now we introduce the mixed model to deal with the theta lift from $\GSO(V)$ to $\GSp^+(W).$
 	From the Witt decomposition $W=X+Y$ and
 	\[V=Fv_0+V_E+Fv_0^\ast \quad\Big(v_0=\begin{pmatrix}
 	1&0\\0&0
 	\end{pmatrix}~\mbox{and}~v_0^\ast=\begin{pmatrix}
 	0&0\\0&1
 	\end{pmatrix}\Big),
 	\]
 	 we obtain $$V\otimes X+V\otimes Y=(Fv_0\otimes W )+ V_E\otimes(X+Y)+(Fv_0^\ast\otimes W). $$
 	Note that the two isotropic subspaces $Fv_0\otimes X$ and $Fv_0^\ast\otimes Y$ are paired via the natural symplectic form $\langle-,-\rangle$ on $V\otimes W.$ The intertwining map
 	\[I:S(V\otimes Y)\rightarrow S(V_E\otimes Y)\otimes S(W\otimes v_0^\ast ) \]
 	is given by a partial Fourier transform: for $v\in V_E\otimes Y,$ $x\in v_0^\ast\otimes X,~y\in v_0^\ast \otimes Y,$
 	and $z\in v_0\otimes Y,$ we have
 	\[(I\varphi)(v,x,y)=\int_F\psi(zx)\varphi\Big(\begin{pmatrix}
 	z\\v\\y
 	\end{pmatrix} \Big)dz. \]
 	Let $Q$ be the maximal parabolic subgroup of $SO(V)$ which stabilizes $Fv_0,$ and let U be its unipotent radical. Then for $h=\begin{pmatrix}
 	1&b\\0&1
 	\end{pmatrix}\in U,$ we have
 	\[I(\omega_\psi(h)\varphi)(v,x,y)= \psi( x\cdot tr_{E/F}(b\sigma(v)))\psi(-N(b)xy)(I\varphi)(v,x,y) .\]
 	For an element $m$ in the Levi subgroup of $Q,$ set $m=\Big(\begin{pmatrix}
 	1&\\&d
 	\end{pmatrix},\lambda \Big),$ we have
 	\[I(\omega_\psi(m)\varphi )(v,x,y)=|\lambda|(I\varphi)\Big(\frac{v}{\lambda \sigma(d)},\frac{x}{\lambda N(d)},\lambda y\Big) .\]
 	For $g\in Sp(W),$ regard $(I\varphi)(v,x,y)=(I\varphi)(x,y)(v)$ as a function defined on $S( v_0^\ast\otimes W )$ taking values in $S(V_E\otimes Y)$. Then we have
 	\[I(\omega_\psi(g)\varphi)(v,x,y)=\Bigg(\omega_0(g)(I\varphi)\Big(g^{-1}\begin{pmatrix}
 	x\\y
 	\end{pmatrix}\Big)\Bigg)(v) \]
 	where $\omega_0$ is the Weil representation of $\Sp(W)\times \SO(V_E).$
 	\par
 	Now we extend the Weil representation $\omega_\psi$ on $\SO(V)\times \Sp(W) $ to the  group $R_0$ by
 \[\omega_\psi(g,h)\phi=|\lambda_V(h)|^{-\frac{1}{8}\dim V\cdot\dim W } \omega(g_1,1)(\phi\circ h^{-1}) \]
 where \[g_1=g\begin{pmatrix}
 \lambda_W(g)^{-1}&0\\0&1
 \end{pmatrix}\in Sp(W). \]
 And we consider the compact induction
 	\[\Omega=ind_{R_0}^R\omega_\psi. \]
 	
 	Set $\psi_E=\psi\circ tr_{E/F}$ to be an additive character of $E.$ The unipotent radical subgroup $U$  of $\GSO(V)$ is isomorphic to $E.$ 
 	\begin{prop}\label{thetamodeltran}
 		Assuming that $\sigma$ is an irreducible representation of $\GSp^+(W),$ then
 		\[\Hom_U(\Theta_\psi(\sigma),\psi_E)\cong \Hom_N(\sigma,\psi). \]
 	\end{prop}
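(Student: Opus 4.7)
The plan is to mirror the argument of Proposition \ref{whittaker} on the opposite side of the dual pair, computing the twisted Jacquet module $\Omega_{U,\psi_E}$ in the mixed model and then applying Frobenius reciprocity.

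The universal property of $\Theta_\psi(\sigma)$ as the maximal $\sigma$-isotypic quotient of $\Omega$ on the $GSp^+(W)$-side immediately yields
\[
Hom_U(\Theta_\psi(\sigma),\psi_E) \cong Hom_{GSp^+(W)}(\Omega_{U,\psi_E},\sigma),
\]
so it suffices to understand $\Omega_{U,\psi_E}$ as a $GSp^+(W)$-module. I would then invoke the partial Fourier transform $I$ and the action formula recorded in the excerpt,
\[
I(\omega_\psi(u_b)\varphi)(v,x,y) = \psi(x\cdot tr_{E/F}(b\sigma(v)))\,\psi(-N(b)xy)\,(I\varphi)(v,x,y).
\]
Since $\omega_\psi(u_b)$ acts by pointwise multiplication in the mixed model, the $(U,\psi_E)$-coinvariants are supported where this multiplier coincides with $\psi(tr_{E/F}(b))$ for every $b\in E$. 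Comparing the parts linear and quadratic in $b$ yields the two conditions $x\sigma(v)=1$ and $xy=0$; since $x\in F$, the first pins $v$ down to $1/x\in F^\times\subset V_E$, and the second then forces $y=0$. Hence $\Omega_{U,\psi_E}$ is supported on the one-dimensional locus $\{(v,1/v,0):v\in F^\times\}$ and is identified, as a vector space, with $S(F^\times)$.

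The next step is to transport the $GSp^+(W)$-action through $I$. From the mixed-model formulas, $n(r)\in N\subset Sp(W)$ acts on the support by multiplication by $\psi(rN(v))=\psi(rv^2)$, and the torus, Weyl element and similitudes of $GSp^+(W)$ act through the standard Weil-representation formulas. After the substitution $v\mapsto v^{-1}$, one expects $\Omega_{U,\psi_E}$ to identify with (a twist of) the compactly-induced Whittaker representation $c\text{-}\mathrm{ind}_N^{GSp^+(W)}(\psi)$. Frobenius reciprocity together with the self-duality of the Whittaker functional then yields the desired identity.

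The main obstacle is this last identification: carefully matching the action of the torus, Weyl element and similitudes on $S(F^\times)$ with those on the Whittaker-induced representation, and in particular reconciling the double cover $v\mapsto v^2$ coming from $N(v)=v^2$ with the character $\psi$ and the Weil-representation normalization (the quadratic character $\chi_V=\omega_{E/F}$). This is a routine but delicate piece of bookkeeping, entirely parallel to the analogous computation carried out in the proof of Proposition \ref{whittaker}.
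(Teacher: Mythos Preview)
Your support computation is correct: the $(U,\psi_E)$-coinvariants are carried by the locus $\{(v,1/v,0):v\in F^\times\}$ (together with the similitude variable in $NE^\times$, which you have omitted). The gap is the last step. Identifying the resulting $S(F^\times)$ with $ind_N^{GSp^+(W)}\psi$ is not the routine bookkeeping you suggest, and the analogy with Proposition~\ref{whittaker} breaks down precisely here. In that proposition one takes $N$-coinvariants and is left with the action of $GSO(V)$, which in the Schr\"odinger model is purely geometric; the locus $V_a$ is then a single $GSO(V)$-orbit and the induced-representation structure is immediate. Here the roles are reversed: after taking $U$-coinvariants you must describe the $GSp^+(W)$-action, and in the mixed model this passes through the Weil representation $\omega_0$ on $S(V_E)$. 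Your locus is stable only under the Borel $B^+$; the Weyl element acts by Fourier transform in $\omega_0$ and does not correspond to any self-map of the locus. Concretely, the $N$-action you record is $n(r)\cdot f(v)=\psi(rv^2)f(v)$, and no substitution $v\mapsto v^{-1}$ (nor any ``double cover'' trick with $v\mapsto v^2$) converts this into the Whittaker character $\psi(r)$. So the deferred bookkeeping is in fact the heart of the matter.

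The paper sidesteps this by reversing the two operations. It first uses that $GSp^+(W)$ acts transitively on $NE^\times\times(W-\{0\})$ with stabilizer $N$ at $(1,1,0)$ and writes the relevant piece of $\Omega$ as $ind_{N\times U}^{GSp^+(W)\times U}S(V_E)$, the fiber $S(V_E)$ carrying the restriction of $\omega_0$ to $N$ together with the $U$-multiplier at the base point. The key observation---which your direct approach cannot see---is that under this projection-formula isomorphism the $U$-action becomes the \emph{fiberwise} multiplier $f(v)\mapsto\psi_E(b\sigma(v))f(v)$, independent of the point of $N\backslash GSp^+(W)$. Hence the $(U,\psi_E)$-coinvariants simply replace the fiber by its own coinvariants, namely evaluation at $v=1$, on which $\omega_0(n(r))=\psi(rN(1))=\psi(r)$; this yields $ind_N^{GSp^+(W)}\mathbb{C}_\psi$ directly, and Frobenius reciprocity finishes.
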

This follows from \cite[Proposition 6.3]{ganshimura}.
 	\begin{coro}\label{generic}
 		Assume that $\sigma$ is an irreducible representation of $\GSp^+(W)$. Then
 		$\sigma$ is $\psi$-generic if and only if the big theta lift $\Theta_\psi(\sigma)$ is generic. Here we use the fact that all Whittaker data are equivalent for $\GSO(3,1).$
 	\end{coro}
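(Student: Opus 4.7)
This corollary is an immediate consequence of the preceding Proposition, so the plan is very short. First I would simply unpack the definitions on both sides. By definition, $\sigma$ is $\psi$-generic precisely when $\mathrm{Hom}_N(\sigma,\psi)\neq 0$. On the other hand, viewing $\Theta_\psi(\sigma)$ as a smooth representation of $GO(V)$, and hence of the unipotent radical $U\subset GSO(V)$ of the Siegel-type parabolic fixed earlier, its being generic (with respect to the nondegenerate character $\psi_E(b)=\psi(\mathrm{tr}_{E/F}(b))$ of $U\cong E$) means exactly that $\mathrm{Hom}_U(\Theta_\psi(\sigma),\psi_E)\neq 0$.

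Next, I would invoke the preceding Proposition, which supplies the canonical isomorphism
\[
\mathrm{Hom}_U\bigl(\Theta_\psi(\sigma),\psi_E\bigr)\;\cong\;\mathrm{Hom}_N(\sigma,\psi).
\]
Chaining these three equivalences yields the corollary in one step: one of the Hom spaces is nonzero if and only if the other is.

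Since the entire content of the statement has already been reduced to the proposition, there is essentially no obstacle. The only minor sanity checks I would record are that $\psi_E$ is indeed a nondegenerate character of $U\cong E$ (which follows from the nontriviality of $\psi$ on $F$ and the surjectivity of $\mathrm{tr}_{E/F}$), and that when $\Theta_\psi(\sigma)\neq 0$ the earlier restriction lemma ensures its restriction to $GSO(V)$ is irreducible, so a nonzero $\psi_E$-Whittaker functional on the $GSO(V)$-module is a genuine Whittaker functional on the $GO(V)$-module $\Theta_\psi(\sigma)$; the degenerate case $\Theta_\psi(\sigma)=0$ makes both Hom spaces vanish trivially, preserving the equivalence.
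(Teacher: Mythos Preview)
Your proposal is correct and is exactly the intended argument: the paper gives no separate proof of this corollary, treating it as an immediate consequence of the isomorphism $\mathrm{Hom}_U(\Theta_\psi(\sigma),\psi_E)\cong \mathrm{Hom}_N(\sigma,\psi)$ established in the preceding proposition. Your additional remarks (nondegeneracy of $\psi_E$, the trivial case $\Theta_\psi(\sigma)=0$) are harmless sanity checks not spelled out in the paper.
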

  \subsection{Principal series} Now we start to consider the explicit local theta lifts.
  \par
   Assume that $B$ is a Borel subgroup of $\GSp(W)$ with a Levi subgroup $T.$ We define a subgroup $B^+=B\cap \GSp^+(W)$ and its torus subgroup $T^+=T\cap B^+.$  
   
 Let us start from the irreducible principal series representations  of $\GSp(W)$. Set $$\GL_2^+=\GL_2^+(F)=\GSp^+(W)=\{g\in \GL_2(F)|\omega_{E/F}(g)=1 \}.$$
\begin{lem}
Assuming that $\tau$ is an irreducible infinitely dimensional representation of $\GSp(W),$ then
$\tau|_{\GL_2^+}$ is reducible if and only if $\tau\otimes\omega_{E/F}\cong \tau ,$
in which case, we call it dihedral with respect to $E$.
\end{lem}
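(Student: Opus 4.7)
The plan is to apply the standard Clifford/Mackey theory for the normal subgroup $GL_2^+$ of index $2$ in $GSp(W) = GL_2(F)$. Observe that $GL_2^+$ is the kernel of the character $\omega_{E/F}\circ \det$, so the quotient $GL_2(F)/GL_2^+ \cong \{\pm 1\}$ has exactly two characters: the trivial character and $\omega_{E/F}\circ \det$. Hence $\mathrm{Ind}_{GL_2^+}^{GL_2(F)}(\mathbb{C}) \cong \mathbb{C} \oplus \omega_{E/F}$ as $GL_2(F)$-modules, and by the projection formula
\[
\mathrm{Ind}_{GL_2^+}^{GL_2(F)}\bigl(\tau|_{GL_2^+}\bigr) \;\cong\; \tau \otimes \mathrm{Ind}_{GL_2^+}^{GL_2(F)}(\mathbb{C}) \;\cong\; \tau \oplus (\tau\otimes \omega_{E/F}).
\]

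Next I would apply Frobenius reciprocity to compute the endomorphism algebra of the restricted representation. This yields
\[
\mathrm{End}_{GL_2^+}\bigl(\tau|_{GL_2^+}\bigr) \;\cong\; \mathrm{Hom}_{GL_2(F)}\bigl(\tau,\,\mathrm{Ind}_{GL_2^+}^{GL_2(F)}(\tau|_{GL_2^+})\bigr) \;\cong\; \mathrm{Hom}_{GL_2(F)}(\tau,\tau)\,\oplus\,\mathrm{Hom}_{GL_2(F)}(\tau,\tau\otimes \omega_{E/F}).
\]
Since $\tau$ is irreducible admissible, Schur's lemma makes the first summand one-dimensional, and the second summand is $\mathbb{C}$ when $\tau\cong \tau\otimes \omega_{E/F}$ and is zero otherwise. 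Therefore $\dim\mathrm{End}_{GL_2^+}(\tau|_{GL_2^+})$ equals $2$ in the dihedral case and $1$ otherwise.

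To finish, I would invoke the standard Clifford-theoretic fact that for a normal subgroup of index $2$, the restriction of an irreducible representation has length at most $2$: any two irreducible constituents of $\tau|_{GL_2^+}$ must be conjugate under $GL_2(F)/GL_2^+$, appearing with equal multiplicities, so $\tau|_{GL_2^+}$ is either irreducible or the direct sum of two non-isomorphic irreducibles. Consequently, $\mathrm{End}_{GL_2^+}(\tau|_{GL_2^+})$ has dimension $1$ in the irreducible case and dimension $2$ in the reducible case, matching the dichotomy above. This yields the claimed equivalence.

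The only delicate point is ensuring that Schur's lemma and the Clifford-theoretic length bound hold for infinite-dimensional smooth admissible representations of $GL_2(F)$; this is standard for $p$-adic reductive groups (Bernstein's theorem), and the hypothesis that $\tau$ be infinite-dimensional is only cosmetic, since the one-dimensional case is trivial (both sides fail simultaneously as $\tau\ne \tau\otimes \omega_{E/F}$ for a character $\tau$).
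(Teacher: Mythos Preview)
Your proof is correct and follows essentially the same route as the paper: both compute $\mathrm{End}_{GL_2^+}(\tau|_{GL_2^+})$ via Frobenius reciprocity and the identification $\mathrm{Ind}_{GL_2^+}^{GL_2(F)}(\tau|_{GL_2^+})\cong \tau\oplus(\tau\otimes\omega_{E/F})$, concluding that the dimension is $2$ exactly when $\tau\cong\tau\otimes\omega_{E/F}$. Your write-up is slightly more detailed in making explicit the Clifford-theoretic step linking $\dim\mathrm{End}=2$ to reducibility, which the paper leaves implicit.
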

\begin{prop}
Assume that $\tau=\pi(\chi_1,\chi_2)$ is an irreducible principal series representation of $\GSp(W).$ Then
$\chi_1\neq\chi_2\cdot|-|^{\pm1}.$
\begin{enumerate}[(i)]\label{symW}
	\item If $\tau\ncong \tau\otimes\omega_{E/F}$ and $\chi_1\neq\chi_2\omega_{E/F}|-|^{\pm1} .$ Then $\tau|_{\GL_2^+}$ is irreducible and $$\Theta_\psi(\tau^+)= \theta_\psi(\tau^+)\cong\pi(\chi_2\circ N_{E/F},\chi_1\circ N_{E/F})\boxtimes \chi_1\chi_2\chi_V.$$
	\item If $\tau=\pi(\chi_3,\chi_3\omega_{E/F})$ is dihedral with respect to $E$, then
	$\tau|_{\GL_2^+}\cong \tau^+\oplus\tau^-$
	and $$\Theta_\psi(\tau^+)= \theta_\psi(\tau^+)\cong \pi(\chi_3\circ N_{E/F},\chi_3\circ N_{E/F})\boxtimes\omega_{\tau^+}\omega_{E/F},~\mbox{while the other one}~\theta_\psi(\tau^-)=0,$$
	where $\omega_{\tau^+}$ is the central character of the representation $\tau^{+}.$
	\item If $\tau=\pi(|-|^{1/2},|-|^{-1/2}\omega_{E/F})\otimes\chi_4$ or $\pi(|-|^{-1/2},|-|^{1/2}\omega_{E/F})\otimes \chi_4,$ 
	then $\tau|_{\GL_2^+}$ is irreducible and
	$$\Theta_\psi(\tau|_{GL_2^+} )\cong \big(\pi(|-|_E^{1/2},|-|_E^{-1/2} )\otimes\chi_4\circ N_{E/F}\big)\boxtimes\chi_4\circ N_{E/F}~ \mbox{and}~ \theta_\psi(\tau|_{\GL_2^+})\cong \chi_4\circ N_{E/F}\boxtimes\chi_4\circ N_{E/F},$$
	where  the absolute valuation $|-|_E$ on $E$ is given by $|e|_E=|N_{E/F}(e)|,~\forall e\in E.$
\end{enumerate} 
\end{prop}

%


Let us turn the tables around. Assume that $\Sigma=\pi\boxtimes\chi$ is an irreducible representation of $\GSO(V),$ where $\omega_\pi=\chi\circ N_{E/F}.$ We only consider the big theta lift from $\GSO(V)$
to $\GSp^+(W)$ due to the Howe duality.
\begin{prop}
	Assume that $\pi$ is an irreducible principal series representation of $\GL_2(E)$. Set $\Sigma=\pi\boxtimes\chi$.
	Then $\Theta(\Sigma)=0$ if $\pi\neq\pi^\sigma.$   If $\pi=\BC(\tau),$ then
	\begin{enumerate}[(i)]\label{orthV}
				\item If $\tau=\pi(\chi_1,\chi_2)$ is a principal series and $\chi_1\chi_2^{-1}$ is neither $\mathbf{1}$ nor $\omega_{E/F}$, then
				\begin{enumerate}[(a)]
					\item $\Theta_\psi(\pi\boxtimes\chi_1\chi_2\omega_{E/F})=\pi(\chi_2,\chi_1)|_{\GL_2^+} $ and
					\item $\Theta_\psi(\pi\boxtimes\chi_1\chi_2 )=\pi(\chi_2,\chi_1\omega_{E/F} )|_{\GL_2^+}.$
				\end{enumerate}
						\item If $\tau$ is a dihedral principal series, i.e. $\pi=\pi(\chi_3\circ N_{E/F},\chi_3\circ N_{E/F}),$
						where $\chi_3$ is a character of $F^\times,$
						we consider the representations $\Sigma_1=\pi\boxtimes\chi_3^2\omega_{E/F} $ and $\Sigma_2=\pi\boxtimes\chi_3^2,$ then
						\begin{enumerate}[(a)]
						\item		 $\Theta_\psi(\Sigma_1)=\pi(\chi_3,\chi_3)|_{\GL_2^+}$ and
					\item		$ \Theta_\psi(\Sigma_2)=\Ext^1_{\GL_2^+}(\tau^-,\tau^+)$ is reducible, where $\pi(\chi_3,\chi_3\omega_{E/F})|_{\GL_2^+}=\tau^+\oplus\tau^-$ and $\tau^+$ (resp. $\tau^-$ ) is the $\psi-$generic (resp. $\psi_\epsilon-$generic) component of $\pi(\chi_3,\chi_3\omega_{E/F})|_{\GL_2^+}$.
						\end{enumerate}
		\item If $\tau$ is dihedral supercuspidal with respect to $E$, i.e. $\pi=\BC(\tau)=\pi(\chi_4,\chi_4^\sigma)$ and $\chi_4\neq\chi_4^\sigma,$ then the theta lift 
		$\theta_\psi(\pi\boxtimes\omega_\tau)$ is zero and $$\Theta_\psi(\pi\boxtimes\omega_\tau\omega_{E/F})=\theta_\psi(\pi\boxtimes\omega_\tau\omega_{E/F})=\tau^+,$$
		where $\tau^+$ is the $\psi-$generic component of $\tau|_{\GL_2^+}.$
			\end{enumerate}
\end{prop}
We follow the method in \cite[Appendix A]{gan2011theta} to compute the Jacquet modules.
For the full details of computation, one may refer to \cite[Section 2.4]{lu2016} 
 
 We would like to highlight the fact that given a tempered representation $\tau^+=\theta_{V_E,W,\psi}(\mathbf{1})$ of $\Sp(W),$ which is the theta lift of the trivial representation of $\SO(V_E)\cong E^1,$  the big theta lift $\Sigma=\Theta_{W,V,\psi}(\tau)$ of $\tau$ from $\Sp(W)$ to $\SO(V)$ is irreducible due to \cite[Proposition C.4(i)]{gan2014formal}.  However, $\Theta_{V,W,\psi }(\Sigma)$ is a reducible representation of $\Sp(W).$
 In fact, $\Theta_{V,W,\psi }(\Sigma)=\Ext^1_{\Sp(W)}(\tau^-,\tau^+)$
 where $\tau^+\oplus\tau^-=I(\omega_{E/F})$ as a reducible principle series of $\Sp(W)$ and $\tau^-=\theta_{\epsilon V_E,W,\psi}(\mathbf{1}).$ This is why we can get Proposition \ref{orthV}(ii)(b).
\subsection{Steinberg Representations}
Now we consider the Steinberg representation which is the unique quotient of a reducible principal series representation.
\begin{thm}
\begin{enumerate}
\item Let $St_F$ be the Steinberg representation of $\GL_2(F)$.
Then $St=St_F|_{\GL_2^+(F)}$ is irreducible and $\Theta_\psi(St)=\theta_\psi(St)\cong \BC(St_F)\boxtimes\omega_{E/F}.$
\label{StF}
\item Let $St_\chi=St_F\otimes\chi$ be the twisted Steinberg representation of $\GL_2(F).$ 
Then $\BC(St_\chi)$ is an irreducible representation of $\GL_2(E)$, and $\BC(St_\chi)\boxtimes
\chi^2\omega_{E/F}$
is an irreducible representation of $\GSO(V).$ Moreover, we have $\theta_\psi(\BC(St_\chi )\boxtimes\chi^2)=0$ and  $$\Theta_\psi(\BC(St_\chi)\boxtimes\chi^2\omega_{E/F})=\theta_\psi(\BC(St_\chi)\boxtimes\chi^2\omega_{E/F})\cong St_\chi|_{\GL_2^+(F)}.$$
\end{enumerate}
\end{thm}
\begin{proof}
See \cite[Section 2.4.3]{lu2016}.
\end{proof}

\subsection{Supercuspidal Representations}
 Assume that $E/F$ is a quadratic field extension.  Set $V_E=E$ to be the quadratic vector space over $F$ and quadratic form coincides with the norm map $N_{E/F}.$ Then the similitude group $\GO(V_E )$ is isomorphic to $E^\times\rtimes \Gal(E/F). $
 
  The following result is well known, referring to   \cite{gan2011endoscopy},~\cite{kudla1996notes}.
 \begin{prop}
 Let $\mu$ be an irreducible representation of $E^\times.$
 If $\mu$ is Galois invariant, then $\mu$ has two extentions
 $\mu^\pm$ to $\GO(V_E),$ in which case,
 only one of them has a nonzero theta lifting to $\GL_2^+$, denoted by $\mu^+.$
 If $\mu$ is not Galois invariant, then $\mu^{+}=ind_{\GSO(V_E)}^{\GO(V_E)}\mu,$
 and $\Theta_\psi(\mu^+) $ is a non-zero irreducible supercuspidal representation of $\GL_2^+(F).$ And
 $ind_{\GL_2^+}^{\GL_2}(\Theta_\psi(\mu^+))$ is irreducible supercuspidal,
 which is dihedral with respect to E.
 \end{prop}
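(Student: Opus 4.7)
The plan is to combine the explicit Schr\"odinger-model computations developed earlier in this section with the general structural inputs already established: Howe duality for similitudes and the conservation relation. The essential geometric fact is that since $E/F$ is a field extension, $V_E=(E,N_{E/F})$ is anisotropic, so $GO(V_E)\cong E^\times\rtimes\langle\sigma\rangle$ admits no proper parabolic subgroup, and consequently any nonzero theta lift from $GO(V_E)$ to $GSp^+(W)=GL_2^+$ is automatically supercuspidal. This observation is the key reduction that keeps the argument uniform across the two cases.

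For the Galois-invariant case $\mu^\sigma=\mu$, the character $\mu$ of $GSO(V_E)=E^\times$ has exactly two extensions $\mu^\pm$ to $GO(V_E)$, differing by the nontrivial quadratic character of $GO(V_E)/GSO(V_E)$. The conservation relation $n(\mu^+)+n(\mu^-)=\dim V_E=2$ recalled in the previous section, combined with the fact that $n(\pi)$ takes even values and cannot equal zero for a nontrivial representation (the bottom of the symplectic Witt tower is $Sp(W)$ of dimension $2$), forces exactly one of $\{\mu^+,\mu^-\}$ to admit a nonzero theta lift to $GL_2^+$; we designate that extension as $\mu^+$. A more concrete alternative is to decompose $S(V_E)$ as an $E^\times\rtimes\langle\sigma\rangle$-module by Fourier analysis on $E$ and read off directly which extension of $\mu$ appears as a quotient.

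For the non-Galois-invariant case $\mu^\sigma\neq\mu$, we define $\mu^+:=ind_{GSO(V_E)}^{GO(V_E)}\mu$, which is $2$-dimensional and irreducible. Non-vanishing of $\Theta(\mu^+)$ is established by exhibiting a nonzero intertwiner from the Weil representation $\Omega$ into $\mu^+\boxtimes\sigma_0$ for some nonzero smooth representation $\sigma_0$ of $GL_2^+$, using the Schr\"odinger model on $S(V_E)$ and the fact that $\mu$ actually occurs in $S(V_E)$ as a $GSO(V_E)$-module. Supercuspidality of $\sigma_0=\Theta(\mu^+)$ then follows from anisotropy of $V_E$ via a Jacquet-module computation parallel to the one in the proof of the principal-series theorem above: the Jacquet module of $\Omega$ along the unipotent radical of $B^+\subset GL_2^+$ is supported on the isotropic cone in $V_E$, which is trivial. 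Irreducibility of $\Theta(\mu^+)$ is then immediate from the general proposition that big theta lifts of supercuspidals are irreducible (or zero). Finally, $\tau:=ind_{GL_2^+}^{GL_2}\Theta(\mu^+)$ is irreducible because $\mu^\sigma\neq\mu$ obstructs $\Theta(\mu^+)$ from being fixed by the outer $\mathbb{Z}/2$-action on $GL_2^+$, which on the orthogonal side corresponds to the Galois action of $\sigma$; supercuspidality is preserved under finite-index induction; and the dihedrality $\tau\otimes\omega_{E/F}\cong\tau$ is automatic since $\omega_{E/F}$ is trivial on $GL_2^+$.

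The hardest step is the Galois-invariant dichotomy in the first case: the conservation relation at the bottom rung of the symplectic Witt tower is a delicate numerical statement, and in practice one usually verifies it either by invoking the deep theorem of Kudla--Rallis--Sun--Zhu quoted above or, more concretely, by carrying out the explicit decomposition of $S(V_E)$ under $E^\times\rtimes\langle\sigma\rangle$ by Fourier analysis on $E$. A secondary subtlety is guaranteeing that $\Theta(\mu^+)$ is genuinely nonzero in the regular case; here the mixed-model formulation makes it tractable because one can write down the intertwiner explicitly rather than merely arguing abstractly.
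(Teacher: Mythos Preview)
The paper does not prove this proposition; it simply records it as well known and defers to \cite{gan2011endoscopy} and \cite{kudla1996notes}. Your non-Galois-invariant case is sound: anisotropy of $V_E$ forces the $N$-Jacquet module of the Weil representation to be supported at $0\in V_E$, where $E^\times$ acts through the norm, so no non-Galois-invariant $\mu$ can appear and $\Theta(\mu^+)$ is supercuspidal; irreducibility and the properties of the induction to $GL_2$ then follow as you indicate.

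The gap is in the Galois-invariant dichotomy. Your conservation-relation step is miscalibrated: the relation quoted in the paper is for the isometry group $O(V_E)$, not $GO(V_E)$, and one must first note that $\mu^\sigma=\mu$ forces $\mu=\chi\circ N_{E/F}$, hence $\mu|_{E^1}=1$, so that $\mu^\pm|_{O(V_E)}$ are precisely the trivial character and $\det$. Your assertion that ``$n(\pi)$ cannot equal zero for a nontrivial representation'' then fails---the symplectic tower begins at $Sp_0=\{1\}$ and $n(\mathrm{triv})=0$ is exactly what occurs here. With the correct bookkeeping, conservation (in rank units) gives $n(\det)=\dim V_E=2$, i.e.\ first occurrence at $Sp_4$, so $\det$ does \emph{not} lift to $SL_2$ and only the extension restricting to the trivial character survives to $GL_2^+$. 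Your ``concrete alternative'' reaches the same conclusion directly and is in fact the cleaner route: $\sigma$ acts trivially on the $E^1$-coinvariants $S(V_E)_{E^1}$ because every $E^1$-orbit in $V_E$ is $\sigma$-stable and fiber integration over each orbit is $\sigma$-equivariant, so the $\det$-isotypic quotient of $S(V_E)$ vanishes. You rightly flagged this as the hardest step; carrying out either the corrected conservation argument or the direct computation would close the gap.
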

 
 If $\mu=\mu_F\circ N_{E/F}$ for some $\mu_F,$
 then $ind_{\GL_2^+}^{\GL_2}(\Theta(\mu^+))=\pi(\mu_F,\mu_F\omega_{E/F}).$ 
 Moreover, we have $\Theta(\mu^-)=0.$
 In fact, $\mu^+$ is the extension which satisfies $\mu^+|_{\Oo(V_E)}=\mathbf{1}$ and $\mu^-$ is the character of $\GO(V_E)$ satisfying $\mu^-|_{\Oo(V_E)}=\det.$ 
\begin{thm}\label{supercus}
Assume that $\tau$ is a supercuspidal representation of $\GL_2(F).$
\begin{enumerate}[(i)]
\item If $\tau$ is not dihedral with respect to $E$, i.e. $\tau|_{\GL_2^+}$ is irreducible, then
$\BC(\tau)$ is a supercuspidal representation of $\GL_2(E)$ and $
\Theta_\psi(\tau|_{\GL_2^+}) =\theta_\psi(\tau|_{\GL_2^+})\cong \BC(\tau)\boxtimes\omega_{E/F}\omega_\tau.$
\item If $\tau\cong \tau\otimes\omega_{E/F}$ is dihedral with respect to $E$ and $\phi_\tau=Ind_{WD_E}^{WD_F}\chi,$ then $\BC(\tau)=\pi(\chi,\chi^\sigma)$ is a principal series of $\GL_2(E),$ where $\sigma$ is the nontrivial element in $\Gal(E/F),$ and  $\chi\neq\chi^\sigma.$ Let $\tau|_{\GL_2^+}\cong\tau^+\oplus\tau^-,$ where $\tau^+$ is $\psi-$generic,
then $\Theta_\psi(\tau^+)=\theta_\psi(\tau^+)\cong \pi(\chi,\chi^\sigma)\boxtimes\omega_{\tau^+}\omega_{E/F}\quad\mbox{and}\quad \theta_\psi(\tau^-)=0.$
\item If $\pi=\BC(\tau)$ is a supercuspidal representation of $\GL_2(E),$
then $\pi\boxtimes\omega_\tau\omega_{E/F}$ and $\pi\boxtimes\omega_\tau$ are  supercuspidal representations of $\GSO(V).$ Moreover, we have $$\Theta_\psi(\pi\boxtimes\omega_\tau\omega_{E/F})=\theta_\psi(\pi\boxtimes\omega_\tau\omega_{E/F})=\tau|_{\GL_2^+}~\mbox{ and}~ \theta_\psi(\pi\boxtimes\omega_\tau)=0.$$
\end{enumerate}
\end{thm}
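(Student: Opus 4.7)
The plan is to treat the three cases in sequence, combining the supercuspidal dichotomy for theta lifts with globalization and Howe duality; part (iii) will then fall out of part (i) by the Howe correspondence.

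For part (i), since $\tau$ is supercuspidal and not dihedral with respect to $E$, the restriction $\tau|_{GL_2^+}$ is irreducible and remains supercuspidal. The proposition on supercuspidal theta lifts recalled in Section 3 guarantees that $\Theta_\psi(\tau|_{GL_2^+})$ is either zero or irreducible. Because $\tau$ is $\psi$-generic, Corollary \ref{generic} forces $\Theta_\psi(\tau|_{GL_2^+})$ to be generic, hence non-zero. To identify it, I would globalize: choose a cuspidal automorphic representation $\Pi$ of $GL_2(\mathbb{A}_F)$ with $\Pi_{v_0} \cong \tau$ and mild control at other places, and compute the global theta lift to $GSO(V)(\mathbb{A})$. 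The global theta correspondence for the dual pair $GL_2 \times GSO(V_E \oplus \mathbb{H})$ produces the cusp form $BC(\Pi) \boxtimes \omega_\Pi\omega_{E/F}$; passing to the local component at $v_0$ pins down $\Theta_\psi(\tau|_{GL_2^+}) \cong BC(\tau) \boxtimes \omega_\tau\omega_{E/F}$, and the smallness/irreducibility already established forces $\Theta_\psi = \theta_\psi$.

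Part (ii) follows the same script for the $\psi$-generic summand $\tau^+$: Corollary \ref{generic} supplies non-vanishing and globalization of a dihedral $\tau$ identifies $\theta_\psi(\tau^+) \cong \pi(\chi,\chi^\sigma) \boxtimes \omega_{\tau^+}\omega_{E/F}$ via the same global matching. For the non-generic summand $\tau^-$, I would argue $\theta_\psi(\tau^-) = 0$ by combining the Howe correspondence (which is injective on its domain) with the conservation relation of Kudla-Rallis-Sun-Zhu across the two Witt towers $V^\pm$ of discriminant $E$: if $\tau^-$ had a non-zero lift at dimension $4$, the underlying $GL_2(E)$-representation would again have to be $BC(\tau) = \pi(\chi,\chi^\sigma)$, and the only central character still available is $\omega_\tau$; such a lift would force a preimage of $\pi(\chi,\chi^\sigma)\boxtimes\omega_\tau$ under the Howe correspondence distinct from $\tau^+$, which the conservation relation across the two towers (together with injectivity) rules out.

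For part (iii), $\pi = BC(\tau)$ supercuspidal is equivalent to $\tau$ not being dihedral with respect to $E$, so part (i) applies. Howe duality for similitudes then immediately yields $\theta_\psi(\pi\boxtimes\omega_\tau\omega_{E/F}) \cong \tau|_{GL_2^+}$, and the supercuspidal proposition upgrades this to $\Theta_\psi = \theta_\psi$. The vanishing $\theta_\psi(\pi\boxtimes\omega_\tau) = 0$ follows because a non-zero lift would be an irreducible representation of $GSp^+(W)$ whose only candidate is $\tau|_{GL_2^+}$ (matched by central characters and Langlands parameters), but $\tau|_{GL_2^+}$ is already the unique Howe preimage of $\pi\boxtimes\omega_\tau\omega_{E/F}$, so injectivity forbids a second source. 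The main obstacle is the globalization step in (i)-(ii): one must produce a cuspidal $\Pi$ with the prescribed local component at $v_0$, arrange enough control at ramified and archimedean places, and then verify both that the global theta lift to $GSO(V_E\oplus\mathbb{H})$ is non-vanishing and that its automorphic realization has the expected base-change shape. Everything else reduces to Corollary \ref{generic}, Howe duality, or the conservation relation.
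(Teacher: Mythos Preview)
Your globalization strategy for (i) and the $\tau^+$ half of (ii) is exactly what the paper does, including the identification of the global theta lift via strong multiplicity one. (One technical point the paper handles that you should also arrange: in (ii) the global $\Pi$ is chosen with an auxiliary place $v_1$ where $\Pi_{v_1}$ is supercuspidal and \emph{not} dihedral with respect to $E_{v_1}$, so that $BC(\Pi)$ stays cuspidal and strong multiplicity one applies.)

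The real divergence, and the place where your argument has a gap, is the vanishing $\theta_\psi(\tau^-)=0$ in (ii). Your proposed route via Howe injectivity and the conservation relation contains two errors. First, the central character formula $\omega_{\Theta_\psi(\sigma)}=\chi_V\cdot\omega_\sigma$ gives $\omega_\tau\omega_{E/F}$, not $\omega_\tau$, so your claim that ``the only central character still available is $\omega_\tau$'' is wrong. Second, you assert without justification that the $GL_2(E)$-component of a hypothetical $\theta_\psi(\tau^-)$ would have to be $\pi(\chi,\chi^\sigma)$; nothing you have said forces this. The conservation relation \emph{can} be made to work here, but only with the extra input that $\tau^-$ already has first occurrence at dimension $2$ in the $V^-$ tower (it participates with $O(\epsilon V_E)$), which then forces $r^+(\tau^-)=6$. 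You do not supply that input. The paper instead takes a much shorter route: since $\dim Hom_N(\tau,\psi)=1$ and $\tau^+$ is the $\psi$-generic summand, $\tau^-$ is not $\psi$-generic; by Corollary~\ref{generic} a nonzero $\Theta_\psi(\tau^-)$ would then be a non-generic irreducible representation of $GSO(V)$, i.e.\ a character, which the earlier principal-series computations show cannot be the theta lift of a supercuspidal.

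A smaller issue in (iii): the central character of a hypothetical $\theta_\psi(\pi\boxtimes\omega_\tau)$ is $\omega_\tau\omega_{E/F}$, so $\tau|_{GL_2^+}$ (whose central character is $\omega_\tau$) is \emph{not} a candidate at all. In fact this observation finishes the argument immediately: any $\sigma$ on $GSp^+(W)$ with $\theta_\psi(\sigma)=\pi\boxtimes\omega_\tau$ would have to come from some $\tilde\sigma$ on $GL_2(F)$ with $BC(\tilde\sigma)=\pi$ and $\omega_{\tilde\sigma}=\omega_\tau\omega_{E/F}$, but the only $\tilde\sigma$ with $BC(\tilde\sigma)=BC(\tau)$ are $\tau$ and $\tau\otimes\omega_{E/F}$, both of central character $\omega_\tau$. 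This is what the paper means by ``compare with the theta lift from $GSp^+(W)$.''
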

See \cite[Page 307]{gan2011endoscopy} for the Local-Global Argument. 
Then we can use the Howe duality and Proposition \ref{thetaofsup} to deduce Theorem \ref{supercus}.
\begin{rem}
In the case $(ii)$, we may write $$\BC(\tau)\boxtimes\omega_\tau\omega_{E/F}:=\theta^\ast(\tau) \mbox{  
if  } \BC(\tau)\boxtimes\omega_\tau\omega_{E/F}=\theta(\tau^+).$$
If $\tau$ is dihedral, then $\theta^\ast(\tau)=\theta(\tau^+)$ where $\tau^+$ is $\psi-$generic; if $\tau|_{\GL_2^+}$ is irreducible, then
$\theta^\ast(\tau)=\theta(\tau|_{\GL_2^+}).$
\end{rem}

\subsection{Local Theta Lift from $\GSp^+(W)$ to $\GSO(V^-)$ }\label{sectionV-}
Fix  $\epsilon\in F^\times\setminus NE^\times.$
Now let us consider the $4$-dimensional quadratic space $V^-$ with a quadratic form $q^-,$ where $$~q^-\Big(\begin{pmatrix}
a&x\\\sigma(x)&d
\end{pmatrix}\Big)=\epsilon\cdot (N_{E/F}(x)-ad)~\mbox{for}~v=\begin{pmatrix}
a&x\\\sigma(x)&d
\end{pmatrix}\in V^- .$$ 
Then by Theorem\autoref{similitude}, we have $$\GSO(V^-)=\GSO(1,3)\cong\frac{ \GL_2(E)\times F^\times}{\triangle E^\times},~\mbox{where }~\triangle(t)=(t,N_{E/F}(t^{-1})).$$
Similarly, we can  consider the similitude dual pair  $(\GL_2^+,\GSO(1,3)).$ 

We list the results as follow.
\begin{prop}
Assume that $\tau$ is an irreducible smooth representation of $\GSp(W)$ with infinite dimension. 
\begin{enumerate}[(i)]
\item If $\tau$ is not dihedral with respect to E and $\tau\neq \pi(\omega_{E/F},|-|^{\pm1})\otimes\chi$, then $$\Theta_{W,V^-,\psi}(\tau|_{\GL_2^+})\cong\theta_{W,V^-,\psi}(\tau|_{\GL_2^+})\cong \BC(\tau)\boxtimes \omega_{\tau}\omega_{E/F}.$$
\item If $\tau$ equals to $\pi(|-|^{1/2},|-|^{-1/2}\omega_{E/F})\otimes\chi$ or $\pi(|-|^{-1/2},|-|^{1/2}\omega_{E/F})\otimes\chi,$ then
\[\Theta_{W,V^-,\psi}(\tau|_{GL_2^+ })\cong \pi(|-|_E^{1/2},|-|_E^{-1/2})\otimes\chi\circ N_{E/F}\boxtimes\chi\circ N_{E/F}~\mbox{and} ~\theta_{W,V^-,\psi}(\tau|_{GL_2^+})\cong \chi\circ N_{E/F}\boxtimes\chi\circ N_{E/F}. \]
\item If $\tau\cong \pi(\chi,\chi\omega_{E/F}),~\tau|_{GL_2^+}\cong \tau^+\oplus\tau^-,$
where $\tau^+$ is $\psi-$generic and $\tau^-$ is $\psi_\epsilon-$generic. Then
$$\theta_{W,V^-,\psi}(\tau^+)\cong 0\mbox{  and  } \Theta_{W,V^-,\psi}(\tau^-)=\theta_{W,V^-,\psi}(\tau^-)\cong BC(\tau)\boxtimes\omega_{\tau}\omega_{E/F}.$$
\item If $\tau\cong\tau\otimes\omega_{E/F}$ is dihedral with respect to $E$
and is a supercuspidal representation of 
$\GL_2(F),$  then $\Theta_{W,V^-,\psi}(\tau^-)=\theta_{W,V^-,\psi}(\tau^-)\cong \BC(\tau)\boxtimes\omega_{\tau}\omega_{E/F}$ and $\theta_{W,V^-,\psi}(\tau^+)=0.$
\end{enumerate}
\end{prop}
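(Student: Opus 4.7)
The plan is to adapt the machinery developed for the pair $(GSp^+(W), GSO(V^+))$ in Theorems~\ref{symW} and \ref{supercus} to the anisotropic sibling $V^-$. The essential change is that replacing $q$ by $q^- = \epsilon\cdot q$ twists the additive character appearing throughout the Jacquet computations from $\psi$ to $\psi_\epsilon$, so the Whittaker-model identifications of Proposition~\ref{whittaker} and Corollary~\ref{generic} now select $\psi_\epsilon$-generic representations on the $GSp^+(W)$-side.

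For case (i), where $\tau|_{GL_2^+}$ is irreducible, I would split according to whether $\tau$ is principal series or supercuspidal. For $\tau = \pi(\chi_1, \chi_2)$ an irreducible principal series not dihedral with respect to $E$, I would repeat the long exact sequence argument of Theorem~\ref{symW}(i) with $\psi$ replaced by $\psi_\epsilon$; the final identification of the target as $BC(\tau) \boxtimes \omega_\tau\omega_{E/F}$ is unchanged. For $\tau$ supercuspidal and non-dihedral, I would use the local-global principle of Theorem~\ref{supercus}(i): globalize to a generic cuspidal $\pi$ on $GL_2(\mathbb{A}_{\mathbb{F}})$ for some totally real $\mathbb{F}$ with $\mathbb{E}_{v_0} = E$, choose a global quadratic space $\mathbb{V}^-$ localizing to $V^-$ at $v_0$, and invoke non-vanishing of the global theta lift together with strong multiplicity one. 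Case (ii) is the mixed-model Jacquet computation of Theorem~\ref{symW}(iii), which goes through unchanged since the inducing data depend on $\chi$ and $\omega_{E/F}$ but not on $\epsilon$.

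Case (iii) is the structurally decisive one. The decomposition $\tau|_{GL_2^+} = \tau^+ \oplus \tau^-$ is intrinsically labelled by the additive character, with $\tau^+$ $\psi$-generic and $\tau^-$ $\psi_\epsilon$-generic. Corollary~\ref{generic}, applied to $(GSp^+(W), GSO(V^-))$ whose Whittaker side is governed by $\psi_\epsilon$, forces $\Theta_{W,V^-,\psi}(\tau^-) \neq 0$ and $\Theta_{W,V^-,\psi}(\tau^+) = 0$. Combined with the Howe duality for similitudes and the parallel computation on $V^+$ from Theorem~\ref{symW}(ii), the non-zero lift is identified as $BC(\tau) \boxtimes \omega_\tau\omega_{E/F}$. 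Case (iv) then proceeds by the local-global principle of Theorem~\ref{supercus}(ii), with the global quadratic space chosen so that it localizes to $V^-$ at $v_0$ and the $\tau^\pm$ labelling is consistent across places; the vanishing $\theta_{W,V^-,\psi}(\tau^+) = 0$ follows once more from the $\psi_\epsilon$-dichotomy.

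The main obstacle is case (iii), specifically matching the internal decomposition $\tau^\pm$ (labelled by $\psi$ vs $\psi_\epsilon$) with the Witt-tower dichotomy $V^\pm$. This amounts to careful tracking of the character twist through the mixed-model Jacquet computation, combined with the conservation relation of Kudla--Rallis--Sun--Zhu to rule out simultaneous non-vanishing on both towers.
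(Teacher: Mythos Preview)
The paper states this proposition explicitly ``without the proof'', so there is no argument in the paper to compare against. Your plan is essentially correct and is the natural one: the identity $q^-=\epsilon\,q$ gives an equality of Weil representations
\[
\omega_{\psi}^{V^-}\;\cong\;\omega_{\psi_\epsilon}^{V^+}
\]
as representations of $O(V)\times Sp(W)$ (note $O(V^+)=O(V^-)$ as subgroups of $GL(V)$, $\chi_{V^+}=\chi_{V^-}=\omega_{E/F}$, and the unipotent and Weyl-element formulas are intertwined by $\psi\mapsto\psi_\epsilon$). This extends to the similitude induction, so $\Omega_\psi^{V^-}\cong\Omega_{\psi_\epsilon}^{V^+}$ and hence $\Theta_{W,V^-,\psi}=\Theta_{W,V^+,\psi_\epsilon}$ as functors. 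Once you say this, cases (i)--(iv) follow immediately from Theorems~\ref{symW}, \ref{StF} and \ref{supercus} applied with the additive character $\psi_\epsilon$: the Jacquet-module computations in \ref{symW} use the \emph{trivial} character on $N$ and are literally unchanged, while in the dihedral cases Corollary~\ref{generic} (with $\psi_\epsilon$) singles out the $\psi_\epsilon$-generic summand $\tau^-$ rather than $\tau^+$.

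Two small points. First, in case~(i) you should also cover the twisted Steinberg representations (they are never dihedral with respect to $E$, so they fall here); the argument of Theorem~\ref{StF} transports verbatim. Second, your proposed globalization of a space $\mathbb{V}^-$ for the supercuspidal case works but is heavier than necessary: the product formula for Hasse invariants forces a second bad place, and you would then need the local result for $V^-$ there as well. It is cleaner to avoid this entirely by invoking the isomorphism $\Omega_\psi^{V^-}\cong\Omega_{\psi_\epsilon}^{V^+}$ and quoting Theorem~\ref{supercus} directly with the global character $\Psi$ chosen so that $\Psi_{v_0}=\psi_\epsilon$; the paper's global argument on $\mathbb{V}=\mathbb{V}^+$ then applies without modification.
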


\section{Proof of the Main Theorem (Local)}\label{seclocalproof}
In this section, we give the proof of the \textbf{Main Theorem (Local)}. 
 The key idea is to transfer the period problem $\Hom_{D^\times( F)}(\pi,\mathbb{C}),$ where $\pi$ is an irreducible smooth representation of $D^\times(E),$  to the period problem $\Hom_{PD^\times(F)}(\Sigma,\mathbb{C}),$ where $\Sigma$ 
 is a representation of $\GSO(V)$ associated to $\pi.$ Due to Proposition \ref{whittaker}, the latter one is related to the nonvanishing Whittaker model of the big theta lift $\Theta_\psi(\Sigma).$
 
Recall that for $a\in F^\times,$ and $y_a=\begin{pmatrix}
a&0\\0&-1
\end{pmatrix}\in V,$ Proposition \ref{whittaker} tells us that
\[\Hom_{\SO(y_a^\perp)}(\Sigma,\mathbb{C})\neq0~\mbox{if and only if}~\Hom_N(\Theta_\psi(\Sigma),\psi_a)\neq0. \]
\begin{proof}
[\textbf{Proof of Main Theorem (Local)}]
Assuming that $\omega_\pi|_{F^\times}=1,$ then there exists a character $\mu$ of $E^\times$ such that
$\omega_\pi=\mu^\sigma/\mu$ by Hilbert's Theorem 90.
\begin{enumerate}[(1)] 
	\item We first prove that $(i)$ implies $(ii).$ Assume that $\pi$ is $\GL_2(F)$-distinguished.
	 Pick $a=1,$ then $y_a$ corresponds to the split quaternion algebra by Lemma \ref{DE}. Due to Theorem \ref{SO(3)},
	we have $\SO(y_a^\perp,F)\cong \PGL_2(F).$
	Set $\Sigma=\pi\otimes\mu\boxtimes\mu|_{F^\times},$ then $\omega_\pi\cdot\mu^2=\mu\circ N_{E/F}$ and
	\[\Hom_{\PGL_2(F)}(\Sigma,\mathbb{C})=\Hom_{\PGL_2(F)}(\pi\otimes\mu\boxtimes\mu|_{F^\times},\mathbb{C})=\Hom_{\GL_2(F)}(\pi\cdot\mu\circ\det,\mu|_{F^\times}\circ\det)\quad (\dagger)\]
	i.e. $\Hom_{\SO(y_a^\perp,F)}(\Sigma,\mathbb{C})=\Hom_{\GL_2(F)}(\pi,\mathbb{C} )$ is nonzero. By Proposition \ref{whittaker},  the representation
	$\Theta_\psi(\Sigma)$ of $\GSp^+(W)$ is $\psi$-generic. By what we have shown in Section \ref{sec3}, 
we get	$$\Sigma=\BC(\tau)\boxtimes\omega_{E/F}\omega_{\tau},$$ i.e. $\pi\otimes\mu=\BC(\tau)$ for some representation $\tau$ of $\GL_2(F)$ and $\omega_\tau=\mu|_{F^\times}\omega_{E/F}.$
	\par
	Conversely, if $\pi=\BC(\tau)\otimes\mu^{-1},$ 
	then $\Sigma=\BC(\tau)\boxtimes\mu|_{F^\times}$
	 and the theta lift $\Theta_\psi(\Sigma)$ is $\psi$-generic.
	By Proposition \ref{whittaker},
	one can see that $\Sigma$ is $\SO(y_a^\perp)$-distinguished, i.e $\Hom_{\PGL_2(F)}(\Sigma,\mathbb{C})\neq0.$ By the identity $(\dagger),$ we obtain that $\pi$ is $\GL_2(F)$-distinguished.
	\par
	If $\pi=\BC(\tau)\otimes\mu^{-1},$ we set $\phi_\tau:WD_F\rightarrow \GSp_2(\mathbb{C})=\GL_2(\mathbb{C})$ to be the Langlands parameter of $\tau.$ Assume that $B_\tau$
	is the non-degenerate symplectic bilinear form. Then the Langlands parameter $\phi_\pi$ with respect to $\pi$ is equal to $\phi_\tau|_{WD_E}\cdot\mu^{-1}$ up to conjugacy and $\mu\mu^\sigma=\det\phi_\tau|_{WD_E}.$ Assume that $s\in W_F\setminus W_E.$  Set $B_\pi(m,n)=B_\tau(m,\phi_\tau(s^{-1})n).$ 
	It is easy to check that $B_\pi$ is conjugate-orthogonal.
	\par
	Conversely, we want to show $(iii)$ implies $(ii).$
	Since $(\phi_\pi\otimes\mu)^\sigma=\phi_\pi^\sigma\mu^\sigma=\phi_\pi^\vee\mu^\sigma=\phi\mu,$
	 there is a lift $\phi_\tau:WD_F\rightarrow \GL_2(\mathbb{C})$
	such that $\phi_\tau|_{WD_E}=\phi_\pi\otimes\mu,$ i.e. $\BC(\tau)=\pi\otimes\mu.$
	 Moreover, we can get $\det\phi_\tau(s)=-\mu(s^2) $ since   $\phi_\pi$ is conjugate-orthogonal.
	\item We use the same method. Comparing with the theta lift from $\GSO(V)$ to $\GSp^+(W),$ we know $\Theta_\psi(\Sigma)$ must be $\psi$-generic if it is nonzero. The only trouble case is that $\tau$ is dihedral.
	Pick $a=\epsilon\in F^\times\setminus NE^\times,$
	then $y_a$ corresponds to the non-split quaternion algebra, and $\SO(y_a^\perp,F)\cong PD^\times(F).$
	Then $\pi$ is $D^\times(F)$-distinguished if and only if $\Theta_\psi(\Sigma)$ is $\psi_\epsilon$-generic for $\Sigma=\pi\otimes\mu\boxtimes\mu|_{F^\times}.$ 
\par
Now, we prove that $(i)$ implies $(ii).$ Assume that $\pi$ is $D^\times(F)$-distinguished.
 If $\Theta_\psi(\Sigma)$ is $\psi_\epsilon$-generic, then $\Theta_\psi(\Sigma)\neq0$ and there exists a representation $\tau$
 of $\GL_2(F)$ such that $\Theta_\psi(\Sigma)=\tau|_{\GL_2^+}$ is irreducible. Moreover, by the proof of (1),
 we can get $\pi=\BC(\tau)\otimes\mu^{-1}$ which is $\GL_2(F)$-distinguished. The condition that $\tau|_{\GL_2^+}$ is irreducible means that $\tau$
 can not be dihedral supercuspidal  with respect to $E$ by Proposition \ref{orthV}(iii),
  i.e. $\pi$ can not be of the form $\pi(\chi_1,\chi_2)~\mbox{ where}~
 \chi_1\neq\chi_2 ~\mbox{and}~ \chi_1|_{F^\times}=\chi_2|_{F^\times }=1.$
 \par
 Conversely, if $\pi$ is $\GL_2(F)$-distinguished and not in the case of Proposition \ref{orthV}(iii),
 then we can find a representation $\tau$ of $\GL_2(F)$ such that $\tau|_{\GL_2^+}$ is irreducible and participates in the theta correspondence with $\GSO(V).$ Hence $\tau|_{\GL_2^+}$ is $\psi_\epsilon$-generic, which  implies that
 $\pi$ is $D^\times(F)$-distinguished.
 \item First, 
 $(ii)$ and $(iii)$ are equivalent
 since the group $PD^\times(F)$ is compact. Second,
 we prove that $(iii)$
 implies $(i).$ Assume that $\pi$ is $D^\times(F)$-distinguished and supercuspidal, then $\pi|_{\GL_2(F)}$ is a projective object in the category of smooth representations of $\PGL_2(F),$ then $\Ext^1_{\PGL_2(F) }(\pi|_{\GL_2(F)},\mathbb{C} )=0$
 and $\dim \Hom_{\PGL_2(F)}(\pi|_{\GL_2(F)},\mathbb{C} )=1.$ If  $\pi$ is the twisted Steinberg representation and
 is $D^\times(F)$-distinguished, then  $\pi=St_E\otimes\chi,$ where $\chi|_{F^\times}=\omega_{E/F}.$ Since $\Ext^1_{\PGL_2(F)}(\chi|_{F^\times},\mathbb{C} )=0,$  the Mackey theory implies that 
 \[\Ext^1_{\PGL_2(F) }(\pi|_{\GL_2(F)},\mathbb{C} )\cong \Ext_{E^\times/F^\times}^1(\chi\circ N_{E/F},\mathbb{C} )=0. \]
 If $\pi=\pi(\chi_1,\chi_2)$ is an irreducible principal series and is $D^\times(F)$-distinguished, then $\chi_1\chi_2^\sigma=1$ and by the Mackey theory, we have
 \[\dim \Hom_{\PGL_2(F)}(\pi|_{\GL_2(F)} ,\mathbb{C})=1+\dim \Ext^1_{\PGL_2(F)}(\pi|_{\GL_2(F)},\mathbb{C}). \]
 \par
 Finally, if $\pi$ is $\GL_2(F)$-distinguished and is not $D^\times(F)$-distinguished,
 then $\pi=\pi(\chi_1,\chi_2)$ is a principal series associated with two distinct characters $\chi_1,\chi_2$ of $E^\times,$ and $\chi_1|_{F^\times}=\chi_2|_{F^\times}=1.$
 By a similar reason, using the Mackey theory, we have $$\dim \Hom_{\PGL_2(F)}(\pi|_{\GL_2(F)},\mathbb{C} )=\dim \Ext^1_{\PGL_2(F)}(\pi|_{\GL_2(F)},\mathbb{C} ) .$$
\end{enumerate}
If a character $\mu_1:E^\times\rightarrow\mathbb{C}^\times$ also satisfies $\omega_\pi=\mu_1^\sigma/\mu_1$ and $\pi=BC(\tau)\otimes\mu^{-1},$ we have
\[\mu_1\mu^\sigma=\mu_1^\sigma\mu, \]
which means $\mu_1\mu^\sigma$ is Galois invariant, hence it factors through the norm map. Assume that
$$\mu_1\mu^\sigma=\mu_F\circ N.$$ 
Then setting $\tau_1=\tau\otimes\mu_F\mu^{-1}|_{F^\times},$ we have
$\omega_{\tau_1}=\omega_{E/F}\mu_1|_{F^\times}$ and 
\[\pi\otimes\mu_1=\pi\otimes\frac{\mu_F\circ N}{\mu^\sigma}=BC(\tau)\otimes\frac{\mu_F\circ N}{\mu\mu^\sigma}= BC(\tau_1). \]
Hence, this theorem does not depend on the choice of the character $\mu.$ 
It finishes the proof.
\end{proof}
\begin{rem}
	If we consider the Euler-Poincare pairing \cite{prasad2013ext} $$EP_{PD^\times(F)}(\pi,\mathbb{C})=\dim \Hom_{PD^\times(F)}(\pi,\mathbb{C} )-\dim \Ext_{PD^\times(F)}^1(\pi,\mathbb{C} ),$$
	then given an irreducible smooth representation $\pi$ of $\GL_2(E)$
	with $\omega_\pi|_{F^\times}=1,$ we have
	\[EP_{\PGL_2(F)}(\pi|_{\GL_2(F)},\mathbb{C})=1\mbox{  if  and  only  if  }EP_{PD^\times(F)}(\pi|_{D^\times(F)},\mathbb{C} )=1. \]
\end{rem}
\begin{rem}
	For the archimedean case $F=\mathbb{R}$ and $E=\mathbb{C}$, the results for period problems  also hold.
	Fix a nontrivial additive character $\psi,$ Cognet \cite{cognet1986representation} proved that the small theta lift of infinite dimensional representations from $\GL_2^+(\mathbb{R})$ (matrix with positive determinant) to the similitude orthogonal group $$\GSO(3,1)\cong \frac{\GL_2(\mathbb{C})\times\mathbb{R}^\times }{\triangle\mathbb{C}^\times}$$ has the same pattern with the nonarchimedean situations.
R. Gomez  and C-B Zhu \cite{zhu2015} showed that for the unipotent subgroup $N$ of $\GL_2^+(\mathbb{R}),$ the isomorphisms
	\[\Hom_{N}(\Theta_\psi(\Sigma),\psi)\cong \Hom_{\PGL_2(\mathbb{R} )}(\Sigma^\vee,\mathbb{C})~ \mbox{and}~ \Hom_{N}(\Theta_\psi(\Sigma),\psi_\epsilon)\cong \Hom_{P\mathbb{H}^\times}(\Sigma^\vee,\mathbb{C}) \] 
	hold for the Casselman-Wallach representation $\Sigma$ of $\GSO(3,1),$ where $\epsilon=-1$ and $\mathbb{H}$ is the real Hamilton algebra. Then the proof to the local period for the archimedean case is almost the same. 
\end{rem}
\begin{coro}\cite[Page 162]{flicker1991distinguished} Let $E$ be a quadratic extension of a nonarchimedean local field $F.$
	If $\pi=\pi(\chi_1,\chi_2)$ is an irreducible principal series of $\GL_2(E),$ then $\pi$ is $\GL_2(F)$-distinguished if and only if one of the following holds:
	\begin{itemize}
		\item $\chi_1\chi_2^\sigma=1$ or
		\item $\chi_1$ and $\chi_2$ are two distinct characters of $E^\times$ with $\chi_1|_{F^\times}=\chi_2|_{F^\times}=1.$
	\end{itemize}
\end{coro}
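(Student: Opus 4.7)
The plan is to derive this corollary directly from the equivalence $(i)\Leftrightarrow(ii)$ of Main Theorem (Local) part (1), by unwinding the condition $\pi=BC(\tau)\otimes\mu^{-1}$ case-by-case on the type of $\tau$. A preliminary check is that both stated conditions force $\omega_\pi|_{F^\times}=1$, which is the hypothesis of the main theorem: if $\chi_1\chi_2^\sigma=1$ then $\chi_1|_{F^\times}=\chi_2^{-1}|_{F^\times}$, and the second condition is even more direct. So I may assume $\omega_\pi|_{F^\times}=1$ and fix $\mu$ on $E^\times$ with $\omega_\pi=\mu^\sigma/\mu$ via Hilbert 90.

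For the forward direction, suppose $\pi$ is $GL_2(F)$-distinguished. By the main theorem there is $\tau$ on $GL_2(F)$ with $\pi\otimes\mu=BC(\tau)$ and $\omega_\tau=\omega_{E/F}\cdot\mu|_{F^\times}$. Since $\pi\otimes\mu$ is an irreducible principal series, $\tau$ is either (a) a principal series $\pi(\eta_1,\eta_2)$ that is not dihedral w.r.t.\ $E$, or (b) a supercuspidal that is dihedral w.r.t.\ $E$ (so $BC(\tau)=\pi(\eta,\eta^\sigma)$ with $\eta\ne\eta^\sigma$). In case (a), $\chi_i=(\eta_i\circ N)\mu^{-1}$ gives after a short computation $\chi_1\chi_2^\sigma=(\eta_1\eta_2\circ N)/(\mu\mu^\sigma)$, and combining $\omega_\pi=\chi_1\chi_2=\mu^\sigma/\mu$ with $\omega_\tau=\eta_1\eta_2$ yields $\mu\mu^\sigma=\eta_1\eta_2\circ N$, hence $\chi_1\chi_2^\sigma=1$. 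In case (b), $\{\chi_1,\chi_2\}=\{\eta\mu^{-1},\eta^\sigma\mu^{-1}\}$ are distinct; using $\omega_\tau=\eta|_{F^\times}\omega_{E/F}=\omega_{E/F}\mu|_{F^\times}$ one deduces $\eta|_{F^\times}=\mu|_{F^\times}$, so $\chi_1|_{F^\times}=\chi_2|_{F^\times}=1$.

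For the converse, I would exhibit an explicit $(\tau,\mu)$ in each case. If $\chi_1\chi_2^\sigma=1$, pick $\mu=\chi_2^{-1}$; then $\chi_1/\chi_2$ is Galois-invariant and descends as $\eta\circ N$, and $\pi\otimes\mu=BC(\pi(\eta,1))$; after possibly twisting $\tau$ by $\omega_{E/F}$ (which does not change $BC(\tau)$) we match the central character condition of the main theorem. If instead $\chi_1\ne\chi_2$ and $\chi_1|_{F^\times}=\chi_2|_{F^\times}=1$, Hilbert 90 gives $\chi_i=\mu_i^\sigma/\mu_i$; taking $\mu=\mu_1\mu_2$ one finds $\pi\otimes\mu=\pi(\mu_1^\sigma\mu_2,\mu_1\mu_2^\sigma)$, whose two characters are interchanged by $\sigma$ and distinct precisely because $\chi_1\ne\chi_2$, so this is $BC(\tau)$ for the dihedral supercuspidal $\tau$ attached to $\eta=\mu_1^\sigma\mu_2$. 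In both cases the main theorem then gives $GL_2(F)$-distinction.

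The main obstacle is essentially bookkeeping: (i) ensuring the classification of $\tau$ such that $BC(\tau)$ is an irreducible principal series truly consists only of the two cases above (ruling out Steinberg, which base changes to Steinberg, and non-dihedral supercuspidal, whose base change stays supercuspidal), and (ii) tracking the twist ambiguity in $(\tau,\mu)$ so that the central character condition $\omega_\tau=\omega_{E/F}\mu|_{F^\times}$ can always be arranged. Both are handled by twisting $\tau$ by $\omega_{E/F}$ and adjusting $\mu$ along $E^\times/\{\mu_F\circ N\}$, exactly as in the remark at the end of the proof of the Main Theorem (Local) showing that the conclusion is independent of the choice of $\mu$.
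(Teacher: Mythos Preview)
Your approach is essentially identical to the paper's: both directions go through the criterion $(i)\Leftrightarrow(ii)$ of the Main Theorem (Local), and the case split (principal series $\tau$ versus dihedral supercuspidal $\tau$) is exactly the paper's dichotomy ``$\chi_i\mu$ factor through the norm'' versus ``$(\chi_1\mu)^\sigma=\chi_2\mu$'', just phrased in terms of the type of $\tau$. Your converse construction in the second bullet (taking $\mu=\mu_1\mu_2$ and the dihedral supercuspidal attached to $\mu_1\mu_2^\sigma$) is literally the paper's construction.

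One small slip to fix: in the converse for the case $\chi_1\chi_2^\sigma=1$, you write that ``twisting $\tau$ by $\omega_{E/F}$'' will adjust the central character. It will not: $\omega_{\tau\otimes\omega_{E/F}}=\omega_\tau\cdot\omega_{E/F}^2=\omega_\tau$. What actually works is replacing $\tau=\pi(\eta,1)$ by $\pi(\eta\omega_{E/F},1)$ (same base change, central character shifted by $\omega_{E/F}$); equivalently, note that the two descents of $\chi_1\chi_2^{-1}$ to $F^\times$ differ by $\omega_{E/F}$ and exactly one of them satisfies $\omega_\tau=\omega_{E/F}\,\mu|_{F^\times}$. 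The paper sidesteps this by choosing $\mu=\chi_2^\sigma$ and $\tau=\pi(\omega_{E/F},\chi_2|_{F^\times})$ directly, which you may find cleaner. Also, in your forward case (a) the restriction ``not dihedral'' is unnecessary: your central-character computation $\mu\mu^\sigma=\omega_\tau\circ N$ and hence $\chi_1\chi_2^\sigma=1$ goes through for any principal series $\tau$, dihedral or not.
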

\begin{proof}
	If $\pi$ is $\GL_2(F)$-distinguished, by the Main Theorem (Local), there exists a character $\mu$ and a representation $\tau$ such that $\pi=\BC(\tau)\otimes\mu^{-1}$ with $\omega_\pi=\mu^\sigma/\mu$ and $\omega_{\tau}=\omega_{E/F}\mu|_{F^\times}.$ Then $\pi(\chi_1\mu,\chi_2\mu)$ is Galois invariant and
	$\mu^\sigma=\mu\chi_1\chi_2.$ Then either $(\chi_1\mu)^\sigma=\chi_2\mu$ and $\chi_1\neq\chi_2$ or  $\chi_1\mu$ and $\chi_2\mu$ both factor through the norm map. For the first case, we have $$\chi_1|_{F^\times}=\chi_2|_{F^\times}~\mbox{ and}~~ \mu|_{F^\times}=\omega_\tau\cdot\omega_{E/F}=(\chi_1\mu)|_{F^\times}\omega_{E/F}\cdot\omega_{E/F},~\mbox{i.e.}~\chi_1|_{F^\times}=1.$$
	For the second case, we have $(\chi_1\mu)^\sigma=\chi_1\mu,$ then $\chi_1^\sigma\chi_2=1.$
	
	Conversely, if $\chi_1$ and $\chi_2$  are trivial on $F^\times,$ set $\pi=\pi(\frac{\mu_1^\sigma}{\mu_1},\frac{\mu_2^\sigma}{\mu_2})$ and $\mu=\mu_1\mu_2.$ Since $\chi_1\neq\chi_2,$  the character $\mu_1\mu_2^\sigma$ of $E^\times$ does not factor through the norm. Let $\tau$ to be the dihedral supercuspidal representation of $\GL_2(F)$ with respect to $(E,\mu_1\mu_2^\sigma).$ Then $\omega_\tau=\mu|_{F^\times}\omega_{E/F}$ and $\pi=\BC(\tau)\otimes\mu^{-1}$ is $\GL_2(F)$-distinguished.
	If $\chi_1\chi_2^\sigma=1,$ set $\mu=\chi_2^\sigma$ and $\tau=\pi(\omega_{E/F},\chi_2|_{F^\times} ),$
	then $\pi=\BC(\tau)\otimes\mu^{-1}$ is $\GL_2(F)$-distinguished. 
\end{proof}
\begin{coro}
	For the twisted Steinberg representation $\pi=St_E\otimes\chi$ of $\GL_2(E),$ the following statements are equivalent:
	\begin{enumerate}[(i)]
		\item $\pi$ is $\GL_2(F)$-distinguished;
		\item $\pi$ is $D^\times(F)$-distinguished;
		\item the Langlands parameter $\phi_\pi$ is conjugate-orthogonal;
		\item $\chi|_{F^\times}=\omega_{E/F}.$
	\end{enumerate}
\end{coro}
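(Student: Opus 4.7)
The plan is to deduce all three equivalences directly from the Main Theorem (Local), using the fact that $\pi = St_E \otimes \chi$ is a discrete series (hence generic) to which both parts (1) and (2) of that theorem apply. Throughout, note that $\omega_\pi = \chi^2$, so the hypothesis $\omega_\pi|_{F^\times} = 1$ of the main theorem is automatic once $\chi|_{F^\times} = \omega_{E/F}$, and conversely is a consequence of distinguishedness.

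For the equivalence (i) $\Leftrightarrow$ (iii), I would apply part (1) of the Main Theorem. If $\pi$ is $GL_2(F)$-distinguished, write $\pi \otimes \mu = BC(\tau)$ with $\omega_\pi = \mu^\sigma/\mu$ and $\omega_\tau = \omega_{E/F} \cdot \mu|_{F^\times}$. Since $BC(\tau) = St_E \otimes (\chi\mu)$ is a twist of the Steinberg, its Langlands parameter has nontrivial monodromy, which forces $\tau = St_F \otimes \chi'$ for some character $\chi'$ of $F^\times$ with $\chi' \circ N_{E/F} = \chi\mu$. Restricting this last equality to $F^\times$ gives $\chi|_{F^\times}\cdot\mu|_{F^\times} = \chi'^2 = \omega_{E/F}\cdot\mu|_{F^\times}$, hence $\chi|_{F^\times} = \omega_{E/F}$. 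For the converse, assuming $\chi|_{F^\times} = \omega_{E/F}$, I would observe that for all $s \in E^\times$,
\[
\chi(s)\chi^\sigma(s) = \chi(N_{E/F}(s)) = \omega_{E/F}(N_{E/F}(s)) = 1,
\]
so $\chi^\sigma = \chi^{-1}$ and therefore $\chi^2 = \chi/\chi^\sigma$. By Hilbert 90 one can choose $\mu$ with $\mu^\sigma/\mu = \chi^2$; the identity $\chi^2 = \chi/\chi^\sigma$ then implies that $\chi\mu$ is Galois-invariant, so $\chi\mu = \chi' \circ N_{E/F}$ for some character $\chi'$ of $F^\times$. Taking $\tau = St_F \otimes \chi'$, one checks $BC(\tau) = \pi\otimes\mu$ and $\omega_\tau = \chi'^2 = \chi|_{F^\times}\cdot \mu|_{F^\times} = \omega_{E/F}\cdot\mu|_{F^\times}$, so part (1) of the Main Theorem yields that $\pi$ is $GL_2(F)$-distinguished.

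For (i) $\Leftrightarrow$ (ii), I would appeal to part (2) of the Main Theorem. The exceptional family excluded there consists of irreducible principal series $\pi(\chi_1,\chi_2)$ with $\chi_1 \neq \chi_2$ and $\chi_i|_{F^\times} = 1$, but $\pi = St_E \otimes \chi$ is a discrete series and in particular is not an irreducible principal series; hence the exclusion is vacuous in our situation, and $GL_2(F)$-distinction and $D^\times(F)$-distinction are equivalent.

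The main subtlety I expect is the converse in the first step, where one must justify that the condition $\chi|_{F^\times} = \omega_{E/F}$ is strong enough to produce both the character $\mu$ with $\mu^\sigma/\mu = \chi^2$ \emph{and} the auxiliary character $\chi'$ making $\chi\mu$ factor through the norm; the key identity $\chi\chi^\sigma = 1$, which comes from the fact that norms lie in the kernel of $\omega_{E/F}$, is precisely what makes these two requirements compatible.
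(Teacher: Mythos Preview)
Your proposal is correct. The paper states this corollary without proof, leaving it as a direct consequence of the Main Theorem (Local), and your argument carries this out in full: part (2) of the theorem gives (i)~$\Leftrightarrow$~(ii) since the excluded principal series case cannot occur for a twist of Steinberg, and part (1) gives (i)~$\Leftrightarrow$~(iii) via the base-change description.

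One small simplification in the converse of (i)~$\Leftrightarrow$~(iii): once you have shown $\chi^\sigma=\chi^{-1}$, you may take $\mu=\chi^{-1}$ directly rather than invoking Hilbert~90. Then $\mu^\sigma/\mu=\chi/\chi^\sigma=\chi^2=\omega_\pi$, and $\pi\otimes\mu=St_E=BC(St_F)$, so with $\tau=St_F$ one has $\omega_\tau=1=\omega_{E/F}\cdot\chi^{-1}|_{F^\times}=\omega_{E/F}\cdot\mu|_{F^\times}$, and the conditions of the Main Theorem are verified immediately. This is the choice implicitly used in the paper's proof of part~(3) of the Main Theorem, where the same implication is invoked.
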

More generally, we can consider the conditions for $(\GL_2(F),\chi)$-period problems, i.e.
$\Hom_{\GL_2(F) }(\pi,\chi ). $
\par
Assume that $\omega_\pi|_{F^\times}=\chi^2$. One may pick a character $\chi_E$ of $E^\times$ such that $\chi_E|_{F^\times}=\chi^{-1}$. Then
\[\dim \Hom_{\GL_2(F)}(\pi,\chi )=\dim \Hom_{\GL_2(F)}(\pi\otimes\chi_E,\mathbb{C} ) \]
which has been discussed before.
\section{Global Theta Lift for Similitude Groups}
Let $F$ be a number field. 
Assume that $E$ is a quadratic field extension of $F.$  
Let $\mathbb{A}$ be the adele ring of $F$
and $\mathbb{A}_E=\mathbb{A}\otimes_F E.$  Let us fix 
 a unitary additive character $\psi:F\backslash\mathbb{A}\rightarrow\mathbb{C}^\times.$
 Let $W$ be a $2$-dimensional symplectic vector space over $F$ and let $V=F\oplus E\oplus F$ be a $4$-dimensional quadratic space over $F$ with quadratic form $q(e,a,b)=N_{E/F}(e)-ab.$
Let $\omega_{\psi}$ be the Weil representation for the dual pair $\Sp(W,\mathbb{A})\times \SO(V,\mathbb{A}).$

Recall \[R=\GSp^+(W)\times \GSO(V)~\mbox{and }~R_0=\{(g,h)\in R~| \lambda_W(g)\cdot \lambda_V(h)=1 \} \]
and the actions are the same as in the local setting, see Section \ref{sec3}.

For a Schwartz function $\phi\in S(V,\mathbb{A})$ and $(g,h)\in R_0(\mathbb{A}),$ set
\[ \theta_\psi(\phi)(g,h)=\sum_{x\in  V(F)}\omega_\psi(g,h)\phi(x).
\] 
Then $\theta_\psi(\phi)$ is a function of moderate growth on $R_0(F)\backslash R_0(\mathbb{A}).$
Suppose that $\pi\boxtimes\mu$ is a cuspidal automorphic representation of $\GSO(V,\mathbb{A})$ and $f\in \pi\boxtimes\mu,$
we set
\[\theta_\psi(\phi,f)(g)=\int_{\SO(V,F)\backslash \SO(V,\mathbb{A})}\theta_\psi(\phi)(g,h_1h)\cdot\overline{f(h_1h)}dh
\]where $h_1$ is any element of $\GSO(V,\mathbb{A})$ such that $sim_V(h_1)=sim_W(g^{-1}).$ Then
$$\Theta_\psi(\pi\boxtimes\mu)=\Big<\theta_\psi(\phi,f):\phi\in S(V,\mathbb{A}),~f\in\pi\boxtimes\mu\Big>$$
is an automorphic representation (possibly zero) of $\GSp(W)^+(\mathbb{A}).$ Considering the Fourier coefficient
of $\theta_\psi(\phi,f)$ with respect to $\psi_a,$ where $\psi_a(x)=\psi(ax),$ and $a$ is an arbitrary nonzero element in $\mathbb{A},$ we have
\begin{equation*}
\begin{split}
&Wh_{N,\psi_a}(\theta_\psi(\phi,f))\\
=&\int_{N(F)\backslash N(\mathbb{A})}\overline{\psi_a(u)}\int_{\SO(V,F)\backslash \SO(V,\mathbb{A})}\theta_\psi(\phi)(u,h)\cdot\overline{f(h)}dhdu\\
=&\int_{\SO(V,F)\backslash \SO(V,\mathbb{A})}\overline{f(h)}\cdot\int_{N(F)\backslash N(\mathbb{A})}\overline{\psi(au)}\cdot\sum_{x\in V(F)}\omega_\psi(u,h)\phi(x)dudh\\
=&\int_{\SO(V,F)\backslash \SO(V,\mathbb{A})}\overline{f(h)}\cdot\sum_{x\in V^a}\phi(h^{-1}x)dh
\end{split}
\end{equation*}
where $V^a=\{x\in V(F)|~q(x)=a \}=\SO(V,F)\cdot\{y_a\}$ and the stabilizer of $y_a$ is $\SO(y_a^\perp,F).$
Hence, we have\begin{equation*}
\begin{split}Wh_{N,\psi_a}(\theta_\psi(\phi,f))=&\int_{\SO(V,F)\backslash \SO(V,\mathbb{A})}\overline{f(h)}\cdot\sum_{\gamma\in ~\SO(y_a^\perp,F)\backslash \SO(V,F)}\phi{(h^{-1}\gamma^{-1}y_a )}dh\\
=&\int_{\SO(y_a^\perp,F)\backslash \SO(V,\mathbb{A})}\overline{f(h)}\phi(h^{-1}y_a)dh\\
=&\int_{\SO(y_a^\perp,\mathbb{A})\backslash \SO(V,\mathbb{A}) }\phi(h^{-1}y_a)\cdot\int_{\SO(y_a^\perp,F)\backslash \SO(y_a^\perp,\mathbb{A}) }\overline{f(th)}dtdh.
\end{split}\end{equation*}
That means $\theta_\psi(\phi,f)$ has non-zero $(N,\psi_a)$-period
if and only if $$\int_{\SO(y_a^\perp,F)\backslash\SO(V,\mathbb{A})}\overline{f(h)}\phi(h^{-1}y_a)dh\neq0.$$

We define the period integral $$P_{\SO(y_a^\perp)}(f):=\int_{\SO(y_a^\perp,F)\backslash \SO(y_a^\perp,\mathbb{A})}f(t)dt,$$ from \cite[Proposition 5.2]{ganshimura}, we know that
$Wh_{N,\psi_a}(\theta_\psi(\phi,f))$  is nonzero for some $f$ and $\phi$ if and only if the period integrate
$P_{\SO(y_a^\perp)}$ is nonzero on $\pi.$ 
\begin{prop}\cite[Proposition 5.2]{ganshimura} The $\psi_a$-coefficient of $\Theta_{\psi}(\pi)$ is nonzero  if and only if the period integral $P_{\SO(y_a^\perp)}$ is nonzero on $\pi.$ In particular, if $P_{\SO(y_a^\perp)}$ is nonzero on $\pi$ for some
	$a,$ then the global theta lift $\Theta_\psi(\pi)$ is nonzero.\label{periodsto}
\end{prop}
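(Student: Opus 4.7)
The plan is to leverage the explicit unfolding the author has already displayed and reduce the proposition to a ``bump function'' argument. After the manipulation in the excerpt, the Fourier coefficient has been rewritten as
\[
Wh_{N,\psi_a}(\theta_\psi(\phi,f)) = \int_{SO(y_a^\perp,\mathbb{A})\backslash SO(V,\mathbb{A})} \phi(h^{-1}y_a) \cdot \overline{P_{SO(y_a^\perp)}(R(h)f)} \, dh,
\]
where $R(h)f$ denotes the right translate. The claim is symmetric in form, so the proof reduces to showing that this integral is nonzero for some pair $(\phi, f)$ exactly when the inner period functional is nonzero on $\pi$.

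The easy direction is the ``only if'': if $P_{SO(y_a^\perp)}$ vanishes identically on $\pi$, then since $R(h)f$ still lies in $\pi$ for every $h \in SO(V,\mathbb{A})$, the integrand above vanishes identically, so the $\psi_a$-coefficient vanishes for every choice of $\phi$ and $f$, forcing $\Theta_\psi(\pi)$ to be $(N,\psi_a)$-non-generic.

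For the converse, suppose we are given $f \in \pi$ with $P_{SO(y_a^\perp)}(f) \neq 0$. The continuity of the period integral in $h$ and the $SO(y_a^\perp,\mathbb{A})$-invariance of $h \mapsto P_{SO(y_a^\perp)}(R(h)f)$ on the left show that this function descends to a nonzero continuous function on $SO(y_a^\perp,\mathbb{A})\backslash SO(V,\mathbb{A})$. In particular, there is a small open neighborhood $U$ of the identity coset on which the function has constant nonzero sign (or at least nonzero real part after a suitable scalar twist). Since the map $h \mapsto h^{-1}y_a$ identifies $SO(y_a^\perp,\mathbb{A})\backslash SO(V,\mathbb{A})$ with the adelic orbit $V^a(\mathbb{A}) \subset V(\mathbb{A})$, one can pull back a Schwartz bump function on $V(\mathbb{A})$ supported very close to $y_a$ to produce $\phi$ whose restriction $\phi(h^{-1}y_a)$ is a nonnegative approximation to a delta-mass concentrated in $U$. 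Integrating against the period then yields a nonzero value.

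The only genuine technical point is verifying that such a bump function $\phi \in S(V,\mathbb{A})$ can indeed be constructed so that $h \mapsto \phi(h^{-1}y_a)$ localizes as desired on the homogeneous space; this is standard because $V^a$ is a closed adelic subvariety and $S(V,\mathbb{A})$ surjects onto $C_c^\infty(V^a(\mathbb{A}))$ by restriction. Given that, the proposition follows; the final assertion is then immediate, since nonvanishing of any single Fourier coefficient of an automorphic form implies the form itself, and hence $\Theta_\psi(\pi)$, is nonzero.
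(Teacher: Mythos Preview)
Your proof is correct and follows essentially the same approach as the paper: both unfold the Fourier coefficient to the displayed integral, dispose of the ``only if'' direction trivially, and reduce the ``if'' direction to the surjectivity of the restriction map $S(V,\mathbb{A})\to S(V^a,\mathbb{A})$ (equivalently, the ability to realize an arbitrary bump on the orbit $V^a(\mathbb{A})$ as the restriction of an adelic Schwartz function on $V$). The one point the paper treats more carefully than you do is precisely the adelic subtlety you dismiss as ``standard'': since $S(V,\mathbb{A})$ and $S(V^a,\mathbb{A})$ are restricted tensor products, one must check that the restriction map respects the distinguished vectors at almost all places, which the paper verifies via $V^a(\mathcal{O}_v)=V^a(F_v)\cap V(\mathcal{O}_v)$.
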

This is a global analogue of Proposition \ref{whittaker}.

\begin{coro}
If $\theta_\psi(\phi,f)=0,$ then $P_{\SO(y_a^\perp)}$ vanishes on $\pi.$
\end{coro}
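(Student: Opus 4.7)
The plan is to obtain this statement as an immediate consequence of the computation carried out in the proof of Proposition \ref{periodsto}. Recall that proof produces the explicit identity
\[
Wh_{N,\psi_a}\bigl(\theta_\psi(\phi,f)\bigr)=\int_{SO(y_a^\perp,\mathbb{A})\backslash SO(V,\mathbb{A})}\phi(h^{-1}y_a)\cdot\overline{P_{SO(y_a^\perp)}(h\cdot f)}\,dh,
\]
which is the only tool we need. The statement of the corollary is most naturally read as: if the global theta lift $\Theta_\psi(\pi)$ vanishes, i.e.\ $\theta_\psi(\phi,f)=0$ for all admissible $\phi$ and all $f\in\pi,$ then $P_{SO(y_a^\perp)}$ vanishes on $\pi.$ So the plan is simply to contrapose the nontrivial direction of Proposition \ref{periodsto}.

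First I would observe that if $\theta_\psi(\phi,f)$ is identically zero on $GSp^+(W,\mathbb{A}),$ then in particular every Fourier coefficient $Wh_{N,\psi_a}(\theta_\psi(\phi,f))$ is zero for every $a\in F^\times$ and every pair $(\phi,f).$ Plugging into the displayed identity, the integral on the right-hand side must vanish for every $\phi\in S(V,\mathbb{A})$ and every $f\in\pi.$ Next I would invoke the surjectivity of the restriction map $S(V,\mathbb{A})\to S(V^a,\mathbb{A})$ established in the proof of Proposition \ref{periodsto}: this allows $\phi(h^{-1}y_a)$ to approximate an arbitrary element of $S(V^a,\mathbb{A})$ along the orbit $SO(y_a^\perp,\mathbb{A})\backslash SO(V,\mathbb{A})\cong V^a(\mathbb{A}),$ so the vanishing of the integral for all such $\phi$ forces $P_{SO(y_a^\perp)}(h\cdot f)=0$ for all $h\in SO(V,\mathbb{A})$ and all $f\in\pi.$ Taking $h=1$ gives $P_{SO(y_a^\perp)}(f)=0$ for every $f\in\pi,$ which is the claim.

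There is no serious obstacle here, since everything is already contained in the formula and the density argument of the preceding proposition; the corollary is essentially a bookkeeping restatement. The only mild point worth spelling out is the quantifier convention in ``$\theta_\psi(\phi,f)=0$,'' which must be interpreted over all test data so that the vanishing of the entire Whittaker expansion becomes available. Once this is done, the conclusion is automatic.
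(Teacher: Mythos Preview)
Your proposal is correct and matches the paper's approach: the paper states the corollary without a separate proof, treating it as an immediate consequence of Proposition~\ref{periodsto}, and your argument is precisely the contrapositive of the ``only if'' direction proved there, spelled out via the Whittaker formula and the surjectivity of $S(V,\mathbb{A})\to S(V^a,\mathbb{A})$.
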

\subsection*{Automorphic realization of Mixed Model}
Recall,  we have a Witt decomposition
\[W=X+ Y~\mbox{and}~V=Fv_0+ V_E+ Fv_0^\ast \]
where $V_E=E$ is the quadratic space with a quadratic form $e\mapsto N_{E/F }(e).$ The stabilizer
$Q=TU$ of $v_0$ in $\SO(V)$ is a Borel subgroup of $\SO(V),$
with \[T=\GL(Fv_0)\times E^1~\mbox{and}~U\subset \Hom(v^\ast, V_E). \]
We can consider the maximal isotropic subspace
\[\mathbb{X}= W\otimes v_0^\ast+ Y\otimes V_E\subset W\otimes V \]
and write an element of $\mathbb{X}$ as
\[(w,e)=(x,y,e)\in W+V_E=X+Y+V_E. \]
Assuming that $\omega_\psi$ is the Weil representation of $\Sp(W,\mathbb{A})\times \SO(V,\mathbb{A}),$ then
extend it to  $R_0(\mathbb{A}).$ 
And the action of $\GSp^+(W,\mathbb{A})\times \GSO(V,\mathbb{A})$ is defined as before, See Section \ref{sec3}.

Given a cuspidal representation $\tau$ of $\GSp^+(W),$ set $f\in\tau.$ 
For $\phi'\in S(X\otimes V,\mathbb{A}),$ assume that the partial Fourier transform function is $\phi=I(\phi')\in S(\mathbb{X},\mathbb{A}).$
Define
\[\theta_\psi(\phi,f)=\int_{\Sp(W,F)\backslash \Sp(W,\mathbb{A} ) }\overline{f(g_1g)}\sum_{x\in\mathbb{X}(F) }\omega_\psi(g_1g,h)\phi(x)dg , \]
where $\lambda_W(g_1)=\lambda_V(h).$
Then
$\theta_\psi(\phi,f)$ is an automorphic representation of $\GSO(V)(\mathbb{A}),$
the unipotent radical $U(\mathbb{A})\cong \mathbb{A}_E.$
Consider the subset $\{(x,0,x^{-1})\}\subset (W-\{0\})+ V_E,$ then
\begin{equation*}
\begin{split}
&Wh_{U,\psi_E}(\theta_\psi(\phi,f))\\
=&\int_{U(F)\backslash U(\mathbb{A})}\overline{\psi_E(u)}\int_{\Sp(W,F)\backslash \Sp(W,\mathbb{A})}\theta_\psi(\phi)(u,g)\cdot\overline{f(g)}dgdu\\
=&\int_{U(F)\backslash U(\mathbb{A})}\overline{\psi_E(u)}\cdot \int_{\Sp(W,F)\backslash \Sp(W,\mathbb{A})}\overline{f(g)}\cdot\sum_{x\in W+V_E}\omega_\psi(u,g)\phi(x)dgdu\\
=&\int_{N(F)\backslash \Sp(W,\mathbb{A})}\overline{f(g)}(\omega_0(g)\phi(g^{-1}\begin{pmatrix}
1\\0
\end{pmatrix}))(1)dg\\
=& \int_{N(\mathbb{A})\backslash \Sp(W,\mathbb{A})}\omega_\psi(g)\phi(1,0,1)\cdot\int_{N(F)\backslash N(\mathbb{A})}\overline{f(n(x)g)}\psi(x)dxdg.
\end{split}
\end{equation*}
\begin{coro}
If $\tau$ is $\psi$-generic, then $\theta_\psi(\phi,f)$ is $\psi_E$-generic and hence nonzero. Moreover, if $\theta_{V,W,\psi}(\tau)$ is cuspidal and $\tau$ is not $\psi$-generic, then $\theta_{V,W,\psi}(\tau)=0.$
\end{coro}
The proof is very similar with the proof in \cite[Proposition 5.2]{ganshimura}.

Assume that $\Sigma=\pi\boxtimes\chi$ is an irreducible cuspidal representation of $\GSO(V,\mathbb{A}),$ 
where $\pi$ is a cuspidal automorphic representation of $\GL_2(\mathbb{A}_E).$
Let S be a finite set containing archimedean places of $F,$ 
such that $\Sigma_v$ is spherical and $\psi_v$ is unramified for $v\notin S.$ 

\begin{prop}\label{nonvanishing}
The following statements are equivalent:
\begin{enumerate}[(i)]
\item the theta lift $\theta_\psi(\Sigma)$ from $\GSO(V,\mathbb{A})$ of $\GSp^+(W,\mathbb{A})$ is nonzero;
\item  $\Sigma=\BC(\tau)\boxtimes \omega_\tau\omega_{E/F}$ for some automorphic cuspidal representation $\tau$ of $\GL_2(\mathbb{A});$
\item the partial L-function $L^S(s,\Sigma,Std)$ has a pole at $s=1.$ 
\end{enumerate}
Moreover, in which case, the cuspidal representation $\tau^+=\theta_\psi(\Sigma)$ of $\GL_2^+(\mathbb{A})$ is $\psi$-generic and then $\Sigma$ is $\PGL_2(\mathbb{A})$-distinguished.
\end{prop}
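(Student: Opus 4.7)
The plan is to prove the equivalence by running $(ii)\Rightarrow(iii)\Rightarrow(i)\Rightarrow(ii)$ and then deducing the moreover assertion from the ingredients accumulated along the way. The local theta computations of Section~4 will furnish most of the local information; the only genuinely global inputs are the regularized Siegel--Weil formula and Flicker's characterization of the poles of the Asai $L$-function.

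For $(ii)\Rightarrow(iii)$, I would assume $\Sigma=BC(\tau)\boxtimes\omega_\tau\omega_{E/F}$. The local nonvanishing $\theta_{\psi_v}(\Sigma_v)\neq 0$ at every $v\in S$ is read off directly from Theorems~\ref{orthV} and~\ref{supercus} together with the (twisted) Steinberg case, each of which exhibits a nonzero small theta lift when $\pi_v$ is a base change. The pole of $L^S(s,\Sigma,\mathrm{Std})$ at $s=1$ follows by identifying, up to unramified central twist at the good places, the standard $L$-function of $GSO(V)$ with the Asai $L$-function of $\pi$: by the theorem of Flicker \cite{flicker1988twisted}, the partial Asai $L$-function of a cuspidal representation of $GL_n(\mathbb{A}_E)$ has a pole at $s=1$ precisely when it is a base change. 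The reverse direction $(i)\Rightarrow(ii)$ is parallel: nonvanishing globally forces $\theta_{\psi_v}(\Sigma_v)\neq 0$ at every $v$, and the local theorems of Section~4 then force $\pi_v\cong BC(\tau_v)$ with $\chi_v=\omega_{\tau_v}\omega_{E_v/F_v}$; the nonzero cuspidal automorphic representation $\theta_\psi(\Sigma)$ on $GSp^+(W,\mathbb{A})$ extends to a cuspidal $\tau$ on $GL_2(\mathbb{A})$ satisfying $BC(\tau)_v\cong\pi_v$ almost everywhere, and strong multiplicity one on $GL_2(\mathbb{A}_E)$ upgrades this to a global equality, matching central characters as required.

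The main obstacle is $(iii)\Rightarrow(i)$. Here the plan is to apply the regularized Siegel--Weil formula in the first term range of Gan--Qiu--Takeda \cite{gan2011regularized} to the similitude dual pair $(GSp^+(W),GSO(V))$: it identifies the regularized theta integral paired against a cusp form in $\Sigma$ with a residue of a degenerate Eisenstein series whose leading term at $s=1$ is controlled by $L^S(s,\Sigma,\mathrm{Std})$. A Rallis inner product type computation then shows that the pole of $L^S(s,\Sigma,\mathrm{Std})$ at $s=1$, combined with the nonvanishing of the local doubling zeta integrals at each $v\in S$ (which is equivalent to $\theta_{\psi_v}(\Sigma_v)\neq 0$), produces a nonzero value for $\langle\theta_\psi(\phi,f),\theta_\psi(\phi,f)\rangle$ for some $\phi$ and $f\in\Sigma$, and hence $\theta_\psi(\Sigma)\neq 0$. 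Pinning down the position of the Eisenstein pole, verifying that we lie inside the first term range for this non-split similitude pair, and matching the unramified local factors correctly is the step that will require the most care.

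For the moreover assertion, note that at each place the local theta lift $\theta_{\psi_v}(\Sigma_v)$ is precisely the $\psi_v$-generic component of $\tau_v|_{GL_2^+(F_v)}$, by the case analysis of Theorems~\ref{orthV} and~\ref{supercus}. Hence $\tau^+=\theta_\psi(\Sigma)$ is globally $\psi$-generic. Applying Proposition~\ref{periodsto} with $a=1$, where $SO(y_1^\perp)\cong PGL_2$, the nonvanishing of the $(N,\psi)$-Fourier coefficient of $\tau^+$ forces $P_{PGL_2}$ to be nonzero on $\Sigma$, which is exactly the $PGL_2(\mathbb{A})$-distinguishedness claim.
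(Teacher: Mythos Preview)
Your proposal is essentially correct and runs the same cycle as the paper: the regularized Siegel--Weil/Rallis inner product argument for $(iii)\Rightarrow(i)$, the local classification of Section~4 together with strong multiplicity one for $(i)\Rightarrow(ii)$, and Proposition~\ref{periodsto} with $a=1$ for the ``moreover'' clause are exactly what the paper does. The one substantive divergence is your $(ii)\Rightarrow(iii)$. The paper does not route through the Asai $L$-function and Flicker's theorem; instead it writes down the explicit factorization
\[
L^S(s,\Sigma,\mathrm{Std})\;=\;\zeta_F^S(s)\cdot L^S(s,\tau,\mathrm{Ad}\otimes\omega_{E/F}),
\]
valid once $\Sigma=BC(\tau)\boxtimes\omega_\tau\omega_{E/F}$, and then invokes Shahidi's nonvanishing of $L(1,\tau,\mathrm{Ad}\otimes\omega_{E/F})$ \cite{shahidi1981certain} so that the pole at $s=1$ is visibly contributed by $\zeta_F^S$. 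Your Asai route is viable in principle, but two points are left loose: first, the identification of $L^S(s,\Sigma,\mathrm{Std})$ with an Asai $L$-function depends on the $GL_1$-component $\chi=\omega_\tau\omega_{E/F}$ (one must say which of $As^{\pm}$ appears and with what twist), and you only gesture at this ``up to unramified central twist''; second, Flicker's theorem in \cite{flicker1988twisted} characterizes the Asai pole by $GL_n(\mathbb{A}_F)$-\emph{distinguishedness}, not directly by ``being a base change,'' and $\pi=BC(\tau)$ need not even satisfy $\omega_\pi|_{\mathbb{A}^\times}=1$ without the right twist, so a bare appeal to Flicker does not close the argument. The paper's factorization-plus-Shahidi step avoids both issues and keeps the proof self-contained within the theta/doubling framework already set up; your approach trades that for an appeal to a stronger external global result.
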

\begin{proof} First, we note that $(iii)\Rightarrow(i)$ comes from the same argument as \cite[Corollary 7.9]{gan2011regularized}.
\par
 Now, we want to show $(i)$ implies $(ii).$ If $\Sigma$ participates in the theta correspondence with $\GSp^+(W,\mathbb{A})$, then $\theta_{\psi_v}(\Sigma_v)\neq0$ and
$\Sigma_v\cong \BC(\tau_v)\boxtimes\omega_{\tau_v}\omega_{E_v/F_v}$ at each place $v$ of $F.$ The theta lift $\theta_{\psi}(\Sigma)$ is cuspidal since its constant term is zero.
Assume that $\tau\subset Ind_{\GL_2^+(\mathbb{A})}^{\GL_2(\mathbb{A})}\Theta_\psi(\Sigma)$ is the cusp constituent of $\GL_2(\mathbb{A}),$ which is globally $\psi$-generic. And $\theta^\ast(\tau)$ from $\GL_2(\mathbb{A})$ to $\GSO(3,1)(\mathbb{A})$ is a cuspidal automorphic representation of $\GSO(3,1)(\mathbb{A})$ due to Rallis tower property. By the multiplicity one theorem for $\GL_2(\mathbb{A}_E)$,  we have $$\Sigma\cong\theta^\ast(\tau)\cong \BC(\tau)\boxtimes\omega_\tau\omega_{E/F}.$$
\par 
For $(ii)\Rightarrow(iii),$ if $\Sigma=\BC(\tau)\boxtimes\omega_\tau\omega_{E/F},$ then via  local computations,
we have $$L^S(s, \BC(\tau)\boxtimes\omega_\tau \omega_{E/F}, Std)=L^S(s, \tau, Ad\otimes\omega_{E/F})\zeta^S_F(s).$$ By the result of Shahidi \cite[Theorem 5.1]{shahidi1981certain}, $L^S(s,\tau,Ad\otimes\omega_{E/F})$
is nonzero at $s=1.$ Then $L^S(s,\Sigma,Std)$ has a pole at $s=1.$
\par
From the proof above, we know $\tau^+=\theta_\psi(\Sigma)=\theta_\psi(\BC(\tau)\boxtimes\omega_\tau\omega_{E/F} )$ is $\psi$-generic. By \cite[Proposition 5.2]{ganshimura}, we pick $a=1$ and $y_a=\begin{pmatrix}
1&0\\0&-1
\end{pmatrix}\in V,$ then $\SO(y_a^\perp,\mathbb{A})\cong \PGL_2(\mathbb{A} )$ and $\Sigma$
is $\PGL_2(\mathbb{A})$-distinguished.
\end{proof}

 \section{Proof of the Main Theorem (Global)}
 In this section, we give the proof of  \textbf{Main Theorem (Global)}.
 
 Let us recall the notation. Let $E$ be a quadratic  extension of a number field $F.$
Let $W$ be a $2$-dimensional symplectic space over $F.$
Let $D$ be a quaternion algebra over $F.$
Let $Z(\mathbb{A})=\mathbb{A}^\times$
be the center of $D^\times(\mathbb{A} ).$ 
Let $\pi^D$
be a cuspidal automorphic irreducible representation of $D^\times(\mathbb{A}_E),$ with central character $\omega_{\pi^D}$ and $\omega_{\pi^D}|_{\mathbb{A}^\times}=1.$ 
Assume that the Jacquet-Langlands
correspondence representation $\pi=JL(\pi^D)$ of $\pi^D$  is a cuspidal automorphic representation
of $\GL_2(\mathbb{A}_E).$ 
Let $S$ to be a finite set of places containing archimedean places in $F,$ such that for all $ v\notin S,$   $\pi_v^D$ is unramified and $E_v$ is unramified. Since $D\otimes_F E$ may not be isomorphic to $M_2(E),$ we can not find a point in $V=V_E\oplus\mathbb{H}$ corresponding to $D$ by Lemma \ref{DE}. Then we need another $4$-dimensional quadratic space over $F.$ 
\par
Set $D=\Big(\frac{c,d}{F} \Big)$ and $E=F[\sqrt{d}].$ 
 Assume that $X_D=~\{x\in D\otimes_F E| x^\ast=\sigma(x) \}$ with a quadratic form
 \[q(v)=x_1^2-d(cx_2^2+bx_3^2-bcx_4^2),v=(x_1,x_2,x_3,x_4)\in X\cong F^4. \]
We consider the theta lifting $\theta_{X_D,W,\psi}(\phi,f)$ from
$\GSO(X_D,\mathbb{A})$ to $\GSp^+(W,\mathbb{A}).$
Pick $y=1\otimes1\in X_D,$ then $\SO(y^\perp,F )\cong PD^\times(F)$ by Theorem \ref{SO(3)}.
Let $\Sigma^D=\pi^D\otimes\mu\boxtimes\mu|_{\mathbb{A}^\times}$ be the cuspidal automorphic representation of $\GSO(X_D,\mathbb{A}).$ Respectively, we set $\Sigma=\pi\otimes\mu\boxtimes\mu|_{F^\times}$ to be the cuspidal representation of $\GSO(V,\mathbb{A}).$
For $(g,t)\in D^\times(\mathbb{A}_E)\times\mathbb{A}^\times  $ and $f\in\Sigma^D,$
we define $$f(g,t)=f^D(g)\mu(N(g))\cdot\mu(t),~\mbox{where}~f^D\in\pi^D.$$
Then we have $f(d,N(d)^{-1})=f^D(d)$ and
\[\int_{Z(\mathbb{A})D^\times(F)\backslash D^\times(\mathbb{A}) }f^D(t)dt=\int_{PD^\times(F)\backslash PD^\times(\mathbb{A}) }f(h)dh. \]
From \cite[Proposition 5.2]{ganshimura}, one can see that $\Sigma^D$ is $PD^\times(\mathbb{A})$-distinguished if and only if $\theta_{X_D,W,\psi}(\Sigma^D)$ is $\psi$-generic.
In fact, the only difference between $\Sigma^D$ and $\Sigma$ is at the local places $v$ where $D_v$ ramified and $E_v$ split. In this case, $\Sigma_v$ is $PD^\times_v(F_v)$-distinguished if and only if $\Sigma_v=\tau\boxtimes\tau^\vee$ for some representation $\tau$ of $D^\times(F_v).$ So the key point is the local results at the places $v$ where $D_v$ ramified and $E_v$ non-split, $X_{D,v}\cong V^-,$ which have been classified in Section \ref{seclocalproof}.
\begin{proof}[\textbf{Proof of the Main Theorem (Global)}]
Since $\omega_{\pi^D}|_{Z(\mathbb{A})}=1,$ by Hilbert's Theorem 90, the central character $\omega_{\pi^D}=\mu^\sigma/\mu$
for some Hecke character $\mu$ of $\mathbb{A}_E^\times.$

It is clear that $\Sigma^D$ is $PD^\times(\mathbb{A})$-distinguished if and only if $\pi^D$ is $D^\times(\mathbb{A})$-distinguished.
\par
Now we prove $(ii)$ implies $(iii).$
Assuming that $\Sigma^D$ is $PD^\times(\mathbb{A})$-distinguished, then $\theta_{X_D,W,\psi}(\Sigma^D)$ has a nonzero $(N,\psi)$-Whittaker model due to \cite[Proposition 5.2]{ganshimura}, i.e.
$\tau=\theta_{X_D,W,\psi}(\Sigma^D)\neq0,$ which implies $L^S(1,\Sigma^D,Std)=\infty$ due to \cite[Theorem 2]{yamana2014Lfun}. Then the partial $L$-function $L^S(s,\Sigma,Std)$ has a pole at $s=1$ as well.
By Proposition \ref{nonvanishing}, $\Sigma$ is $\PGL_2(\mathbb{A})$-distinguished.
Moreover $\tau_v$ is both $\psi_v$-generic and  $\psi_{v,\epsilon_v}$-generic. 
By the local results, for each place $v$ of $F$
where $D_v$ is ramified and $E_v$ is not split, we have $\Sigma^D_v=\Sigma_v$ and
$0\neq \Hom_{N_v}(\theta_{X_{D,v},W_v,\psi_v}(\Sigma^D_v),\psi_v)=\Hom_{N_v}(\theta_{V_v,W_v,\psi_v }(\Sigma_v),\psi_{v,\epsilon_v} ),$ where $\epsilon_v\in F_v\setminus N_{E_v/F_v}E_v^\times. $
This means $\Sigma_v$ can be written as $\BC(\tau_0)\otimes\omega_{\tau_0}\omega_{E_v/F_v}$ for some representation $\tau_0$ of $\GL_2(F_v)$ and $\tau_0|_{\GL_2^+}$ is irreducible.
\par
Finally, we prove that $(iii)$ implies $(ii).$ Assume that the local conditions hold.
By the local results, we know $\theta_{X_{D_v},W_v,\psi_v}(\Sigma^D_v)\neq0$ and  is both $\psi_{v,\epsilon_v}$-generic and $\psi_v$-generic for each $v$ where $D_v$ ramified,  $E_v$ non-split and $\epsilon_v\in F^\times_v \setminus N_{E_v/F_v}E_v^\times.$ 
 If $\Sigma$ is $\PGL_2(\mathbb{A} )$-distinguished, then  $\tau=\theta_{V,W,\psi}(\Sigma)$ is $\psi$-generic as a cuspidal representation of $\GSp^+(W)$ and the partial L-function $L^S(s,\Sigma,Std)$ has a pole at $s=1$. By computing the constant term and Fourier coefficient of $\theta_{W,X_D,\psi }(\tau) ,$
  we can obtain that
$\Sigma^D=\theta_{W,X_D,\psi}(\tau)$ is a nonzero cuspidal automorphic representation of $\GSO(X_D,\mathbb{A} ),$ 
and then $\Sigma^D$ is $PD^\times(\mathbb{A})$-distinguished by \cite[Proposition 5.2]{ganshimura} and the fact $\tau$ is $\psi$-generic.
\end{proof}
\bibliographystyle{amsalpha}
\bibliography{draft}
\end{document}